\numberwithin{equation}{section}
\newtheorem{theorem}{Theorem}
\newtheorem{OldTheorem}{Theorem}
\newtheorem{lemma}{Lemma}
\def\supp{{\rm supp\,}}
\def\spec{{\rm spec\,}}
\def\Im{{\rm Im\,}}
\def\Re{{\rm Re\,}}
\def\ZR{\ensuremath{\mathbb R}}
\def\ZZ{\ensuremath{\mathbb Z}}
\def\ZN{\ensuremath{\mathbb N}}
\def\ZT{\ensuremath{\mathbb T}}
\def\ZI{\ensuremath{\mathbb I}}
\begin{document}
\title[On the divergence of double Fourier series]{On the divergence of triangular and eccentrical spherical sums of double Fourier series}

\author
{G. A. Karagulyan}

\address{G. A. Karagulyan, Institute of Mathematics of Armenian National
Academy of Science, Baghramian ave. 24/5, 375019, Yerevan, Armenia}
\email{g.karagulyan@yahoo.com}

\subjclass[2010]{42B08}%
\keywords{divergent triangular sums, double Fourier series}
\date{}
\maketitle
\begin{abstract} We construct a continuous function on the torus with almost everywhere divergence triangular sums of double Fourier series. An analogous theorem we  also prove for eccentrical spherical sums.
\end{abstract}
\maketitle
\section{Introduction}
Carleson \cite{Car} proved that the Fourier series of any function from $L^2(\ZT)$ converges almost everywhere. Hunt \cite{Hunt}, Sj\"{o}lin \cite{Sjo1} and Antonov \cite{Ant1} established the same property of Fourier series in wider function classes. Now the best known result, due to Antonov \cite{Ant1}, proves the a.e. convergence of Fourier series for the functions from $ L\log L\log\log\log L(\ZT)$.

The problem of almost everywhere convergence of multiple Fourier series is well investigated for different definitions of partial sums. If $f\in L^1(\ZT^2)$ is an arbitrary function with the double Fourier series
\begin{equation}\label{v1}
\sum_{n,m=-\infty}^{+\infty}c_{nm}e^{i(nx+my)}
\end{equation}
and $G\subset \ZR^2$ is a bounded region, then we denote by
\begin{equation}\label{v2}
S_ G(x,y,f)=\sum_{(n,m)\in G}c_{nm}e^{i(nx+my)}.
\end{equation}
the partial sum of (\ref{v1}) over the region $G$.  Let $P\subset \ZR^2$ be an arbitrary polygon containing the origin. We set
 \begin{equation*}
 \lambda P=\{(\lambda x,\lambda y):\, (x,y)\in P\},\quad \lambda>0.
 \end{equation*}
   C.~Fefferman \cite{Ch1} proved that if $f\in L^p(\ZT^2)$, $p>1$, then
 \begin{equation}\label{v3}
 S_{\lambda P}(x,y,f)\to f(x,y)\text { a.e. as }   \lambda\to\infty.
 \end{equation}
In the case when $P$ is either rectangle or square is considered by Sj\"{o}lin  \cite{Sjo1} and Antonov \cite{Ant1}. In the rectangle case the relation (\ref{v3}) holds for any $f\in L(\log L)^3 \log\log L$ (\cite{Sjo1}). While $P$ is a square, then it holds whenever $f\in L(\log L)^2 \log\log L$(\cite{Ant1}). Tevzadze \cite{Tev} showed that for any sequence of rectangles $R_1\subset R_2\subset R_3\subset \ldots$  $\ZR^2$ with the sides parallel to the coordinate axes the partial sums $S_{R_k}(x,y,f)$ of any function $f\in L^2(\ZT^2)$  converge a.e..

Note that in all these convergence theorems the partial sums depend on one parameter. The following theorem due to C.~Fefferman \cite{Ch2} shows that the rectangular partial sums
 \begin{equation*}
 S_{NM}(x,y,f)=\sum_{|n|\le N,|m|\le M}c_{nm}e^{i(nx+my)},
 \end{equation*}
 with two independent parameters $N$ and $M$ have a quite different property.
 \begin{OldTheorem}[C.~Fefferman] There exists a real continuous function $f\in C(\ZT^2)$ such that
 \begin{equation*}
\limsup_{N,M\to\infty}|S_{NM}(x,y,f)|=\infty
 \end{equation*}
for any $(x,y)\in \ZT^2$.
 \end{OldTheorem}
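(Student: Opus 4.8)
The plan is to reduce the statement to a single--scale estimate and then combine scales by a gliding--hump argument.

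The one--dimensional input is the classical Fej\'er polynomial
$$V_n(x)=\sum_{k=1}^{n}\frac{\cos((n-k)x)-\cos((n+k)x)}{k}=2\sin(nx)\sum_{k=1}^{n}\frac{\sin kx}{k}.$$
By the Fej\'er--Jackson inequality $\sum_{k=1}^{n}k^{-1}\sin kx$ is bounded above and below by absolute constants, so $\|V_n\|_\infty\le C_0$; on the other hand the spectrum of $V_n$ lies in $\{0,\dots,n-1\}\cup\{n+1,\dots,2n\}$ and the $n$-th partial sum at the origin equals $\sum_{j=0}^{n-1}(n-j)^{-1}=H_n$. Writing $v_n:=C_0^{-1}V_n$ we obtain a real trigonometric polynomial with $\|v_n\|_\infty\le1$ and $S_n(0,v_n)\ge c\log n$; since $\big|\tfrac{d}{dx}S_n(x,v_n)\big|\lesssim n\log n$, the lower bound $S_n(x,v_n)\ge\tfrac{c}{2}\log n$ in fact holds for all $x$ in an interval of length $\asymp 1/n$ centred at $0$, and translating $v_n$ moves this interval to an arbitrary centre. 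Applying the same computation to a frequency--shifted version of $V_n$, one also obtains such building blocks with spectrum inside any prescribed window $[K_0,K_0+2n]$.

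The heart of the proof is a two--parameter substitute for ``one bad polynomial'': \emph{for every $A>0$ and every $K_0\in\ZN$ there is a real trigonometric polynomial $F$ on $\ZT^2$ with $\|F\|_\infty\le1$, whose spectrum lies in $\{(n,m):|n|,|m|\in[K_0,K_1]\}$ for some $K_1=K_1(A,K_0)$, such that for every $(x,y)\in\ZT^2$ there exist $N,M$ with $K_0\le N,M\le K_1$ and $|S_{NM}(x,y,F)|\ge A$.} To construct $F$ one superposes many high--degree translates $v_n(x-\theta)$ whose bad intervals tile $\ZT$, and uses the second variable --- together with the freedom to choose $M$ --- to \emph{address} the individual translates: to the $\theta$--translate one attaches a Fej\'er--kernel bump in $y$ centred at a point depending on $\theta$, the bumps being essentially disjointly supported so that the addressing costs nothing in the sup norm, while $N$ is tuned to the degree of the designated block so that its partial sum (of size $\gtrsim\log n$) survives and the remaining blocks contribute only $O(1)$. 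Carrying this out so that the designated block dominates \emph{uniformly over all $(x,y)$} is the main obstacle and is where the combinatorial work lies. It is exactly here that the two independent parameters are essential: in one dimension the analogous assertion --- ``$\sup_N|S_N(\cdot,g)|\ge A$ everywhere with $\|g\|_\infty\le1$'' --- is false for large $A$ by the Carleson--Hunt maximal inequality, while the two--parameter operator $\sup_{N,M}|S_{NM}|$ is, by contrast, not controlled on any $L^q(\ZT^2)$, so no such obstruction is present.

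Granting the Lemma, the theorem follows at once. Set $A_l=4^{l}$ and apply the Lemma inductively so that the $F_l$ have pairwise disjoint spectra, ordered so that, in each coordinate, every frequency of $F_{l-1}$ is strictly smaller in absolute value than every frequency of $F_l$: having chosen $F_1,\dots,F_{l-1}$, let $K_1^{(l-1)}$ be the largest absolute value of a frequency occurring in them (with $K_1^{(0)}=0$) and apply the Lemma with $A=A_l$ and $K_0=K_1^{(l-1)}+1$ to get $F_l$, with largest frequency $K_1^{(l)}$. Put $f:=\sum_{l=1}^{\infty}2^{-l}F_l$. Then $\|f\|_\infty\le\sum_l2^{-l}\|F_l\|_\infty\le1$ and, being a uniform limit of trigonometric polynomials, $f$ is a real function in $C(\ZT^2)$. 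Fix $(x,y)\in\ZT^2$ and $l\in\ZN$, and choose $N,M$ with $|S_{NM}(x,y,F_l)|\ge A_l$; by the Lemma $K_1^{(l-1)}<N,M\le K_1^{(l)}$, so $N,M$ exceed every frequency of $F_1,\dots,F_{l-1}$ and fall below every frequency of $F_{l+1},F_{l+2},\dots$. Hence $S_{NM}F_{l'}=F_{l'}$ for $l'<l$ and $S_{NM}F_{l'}=0$ for $l'>l$, and therefore
$$|S_{NM}(x,y,f)|\ \ge\ 2^{-l}\,|S_{NM}(x,y,F_l)|-\sum_{l'<l}2^{-l'}\|F_{l'}\|_\infty\ \ge\ 2^{-l}A_l-1\ =\ 2^{l}-1 .$$
Since $l$ was arbitrary, $\limsup_{N,M\to\infty}|S_{NM}(x,y,f)|=\infty$ at the arbitrary point $(x,y)$, which is the assertion.
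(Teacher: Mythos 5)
The paper does not prove Theorem A --- it is quoted as Fefferman's result with the citation \cite{Ch2} --- so there is no in-paper argument to measure your proposal against. Judged on its own terms, your proposal has the right architecture: the 1D Fej\'er block $V_n$ (uniformly bounded, with $S_n(0,V_n)=H_n$ and a stable lower bound on an interval of length $\asymp 1/n$ via the derivative estimate), a single-scale Lemma producing a bounded real polynomial $F$ whose rectangular partial sums reach $A$ at \emph{every} point of $\ZT^2$ within a prescribed frequency band, and a lacunary combination $f=\sum_l 2^{-l}F_l$ with disjoint bands ordered in both coordinates so that $S_{NM}f=\sum_{l'<l}2^{-l'}F_{l'}+2^{-l}S_{NM}F_l$ for $N,M$ in the $l$-th band, giving $|S_{NM}(x,y,f)|\ge 2^{-l}A_l-1\to\infty$. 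The combination step is carried out completely and correctly, and the uniform convergence gives $f\in C(\ZT^2)$.

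The gap is the Lemma itself, which is the entire content of Fefferman's theorem. You state it and indicate the intended construction --- superpose translates $v_n(x-\theta)$ whose bad intervals tile $\ZT$, attach to each $\theta$ a localized bump in $y$, and tune $(N,M)$ to select a block --- but you do not carry it out, and you concede the point: ``carrying this out so that the designated block dominates uniformly over all $(x,y)$ is the main obstacle and is where the combinatorial work lies.'' That obstacle is precisely where the theorem is proved. One must verify (i) that the $y$-addressing can be made essentially disjointly supported at a cost of only $O(1)$ in sup norm while remaining a trigonometric polynomial confined to the required spectral window, (ii) that the selected block retains its $\log n$ gain after a rectangular truncation in \emph{both} variables and that the contributions of all non-selected blocks to $S_{NM}$ remain $O(1)$, and (iii) that every $(x,y)\in\ZT^2$ is reached by some admissible pair $(N,M)$ in the band. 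None of this is checked, and it is not routine --- the tension between spectral localization and spatial localization in $y$, and the need for a pointwise (not just on a large measure set) bound, are exactly the difficulties Fefferman's construction is designed to resolve. As written, your argument reduces Theorem A to a lemma of comparable depth and leaves it there; it is not yet a proof.
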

 Observe that in the above discussed convergence theorems of double Fourier series the summation regions are polygons with fixed side directions. The present paper shows that a little freedom of the side directions of the summation polygons changes the situation basically.

We consider the following rhombus regions
 \begin{equation}\label{v4}
 \Delta(a,b)=\left\{(x,y)\in \ZR^2 :\,a|x| + b|y| \le1\right\},\quad a,b>0.
 \end{equation}
Given such a region $\Delta=\Delta(a,b)$, we denote
 \begin{equation*}
 \rho(\Delta)=\frac{\max\{a,b\}}{\min\{a,b\}}.
 \end{equation*}
  It is clear that ${\Delta}$ is a square, while $\rho(\Delta)=1$. Note that the regions
\begin{equation}\label{v5}
\Delta(a,b)\cap \ZR_+^2,
\end{equation}
are triangles with a vertex at the origin. It is clear that the double series (\ref{v1}) of any real function $f\in L(\ZT^2)$ can be written in the real form (by sine  and cosine  functions), and the sum (\ref{v2}), corresponding to the rhombus region  (\ref{v4}), coincides with the partial sum of the Fourier series in the real form over the triangle (\ref{v5}).

A sequence of regions  $G_k$ is said to be complete, if $\cup_{k=1}^\infty G_k=\ZR^2$.

The next theorem is an equivalent reformulation of the theorem of C.~Fefferman \cite{Ch1}.
\begin{OldTheorem}[C. Fefferman]\label{OT1}  If $\Delta_k$, $k=1,2,\ldots$, is a complete increasing sequence of squares of the form (\ref{v4} ($\rho(\Delta_k)=1$), then for any function  $f\in L^p (\ZT^2)$, $p>1$, the relation
\begin{equation*}
\lim_{k\to\infty} S_{\Delta_k}(x,y,f)=f(x,y)
 \end{equation*}
 holds almost everywhere.
\end{OldTheorem}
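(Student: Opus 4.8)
The plan is to recognize that every square of the form (\ref{v4}) is a dilation of one fixed square, so that the statement reduces directly to the continuous–parameter theorem (\ref{v3}) of C.~Fefferman applied to that square.

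First I would fix the square $\Delta_0=\Delta(1,1)=\{(x,y)\in\ZR^2:\ |x|+|y|\le 1\}$, which is a polygon containing the origin. From the definition (\ref{v4}) one has, for $a>0$,
\begin{equation*}
\Delta(a,a)=\left\{(x,y):\ |x|+|y|\le \tfrac1a\right\}=\tfrac1a\,\Delta_0 .
\end{equation*}
Since $\rho(\Delta_k)=1$ forces $\Delta_k=\Delta(a_k,a_k)$ for some $a_k>0$, we may write $\Delta_k=\lambda_k\Delta_0$ with $\lambda_k=1/a_k$. The hypothesis that the sequence $(\Delta_k)$ is increasing makes $(\lambda_k)$ non-decreasing, and the completeness $\bigcup_k\Delta_k=\ZR^2$ forces $\lambda_k\to\infty$ (otherwise the union would be contained in a bounded square). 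Thus $S_{\Delta_k}(x,y,f)=S_{\lambda_k\Delta_0}(x,y,f)$ with $\lambda_k\uparrow\infty$.

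Next I would simply invoke (\ref{v3}) with $P=\Delta_0$: for every $f\in L^p(\ZT^2)$, $p>1$, there is a set $E\subset\ZT^2$ of full measure such that $S_{\lambda\Delta_0}(x,y,f)\to f(x,y)$ for all $(x,y)\in E$ as the real parameter $\lambda\to\infty$. Since $\lambda_k\to\infty$, restricting this limit to the sequence $(\lambda_k)$ yields $S_{\Delta_k}(x,y,f)\to f(x,y)$ for all $(x,y)\in E$, which is the desired a.e.\ convergence.

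I do not expect a genuine obstacle here, since the whole content is contained in Fefferman's theorem (\ref{v3}); the only points needing care are the elementary geometric observation that all rhombi (\ref{v4}) with $\rho=1$ are homothetic to the single square $\Delta_0$, and the remark that a complete increasing family of such squares necessarily exhausts $\ZR^2$ through dilation factors tending to infinity. One could additionally note the converse implication — that (\ref{v3}) for a general polygon $P$ follows from such sequential statements together with a routine maximal-function and monotonicity argument — which is what makes the reformulation genuinely "equivalent", but this direction is not needed for the proof above.
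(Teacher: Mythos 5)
Your argument is correct and supplies exactly the routine reduction to Fefferman's dilation theorem (\ref{v3}) that the paper leaves implicit by calling Theorem~\ref{OT1} an ``equivalent reformulation'': squares of the form (\ref{v4}) with $\rho=1$ are precisely the dilates $\lambda\Delta(1,1)$, the increasing and completeness hypotheses force $\lambda_k\uparrow\infty$, and sequential a.e.\ convergence follows at once from the continuous-parameter a.e.\ convergence in (\ref{v3}). No gaps.
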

In the present paper we prove the following theorem, which shows that in Theorem \ref{OT1} the condition $\rho(\Delta_k)=1$ can not be replaced by $\rho(\Delta_k)\to 1$.
\begin{theorem}\label{T1}
There exists a real continuous function $f\in C(\ZT^2)$ and a complete sequence of regions $\Delta_k$, $k=1,2,\ldots,$ of the form (\ref{v4} such that $\Delta_{k}\subset \Delta_{k+1}$, $\rho(\Delta_k)\to 1$ and
\begin{equation}\label{v6}
\limsup_{k\to\infty} |S_{\Delta_k}(x,y,f)|=\infty
 \end{equation}
almost everywhere.
\end{theorem}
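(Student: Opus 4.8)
The plan is to derive Theorem~\ref{T1} from a single ``concentration'' estimate for finite families of rhombi, and to obtain that estimate by marrying the one--dimensional du~Bois-Reymond phenomenon (the $L^1$--divergence of Dirichlet kernels) with a Besicovitch/Kakeya--type packing of tubes pointing in the near--diagonal directions that rhombi of the form (\ref{v4}) make available.

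\textbf{Step 1: reduction to a concentration lemma.} I would first reduce Theorem~\ref{T1} to the following finitary statement: \emph{for every $\lambda>0$ there are a trigonometric polynomial $P$ with $\|P\|_{C(\ZT^2)}\le 1$ and a finite increasing family $\Delta_1\subset\dots\subset\Delta_M$ of rhombi of the form (\ref{v4}) with $\rho(\Delta_i)\le 1+1/\lambda$ for all $i$, such that}
\begin{equation*}
\Big|\Big\{(x,y)\in\ZT^2:\ \max_{1\le i\le M}\big|S_{\Delta_i}(x,y,P)\big|>\lambda\Big\}\Big|\ \ge\ c_0,
\end{equation*}
\emph{with $c_0>0$ an absolute constant.} Granting this, one picks $\lambda_j\to\infty$, applies the lemma to obtain polynomials $P_j$, families $\{\Delta_i^{(j)}\}_{i\le M_j}$ and sets $A_j$, places the Fourier spectra of the $P_j$ in rapidly lacunary annuli with the scale of the $j$-th family matched to the $j$-th annulus (so that all $\Delta_i^{(j)}$ are automatically nested across $j$), and sets $f=\sum_j 2^{-j}\tau_j P_j$, where the translations $\tau_j$ of the $(x,y)$--variables are chosen from a dense family so that $\limsup_j(\tau_jA_j)$ is conull. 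Then $f\in C(\ZT^2)$; on the $j$-th annulus $S_{\Delta_i^{(j)}}(x,y,f)$ equals $2^{-j}S_{\Delta_i^{(j)}}(x,y,\tau_jP_j)$ up to an absolutely bounded term (lower blocks are passed unchanged, higher ones annihilated); and inserting between consecutive families a run of genuine squares $\Delta(c,c)$ to make the sequence complete and increasing, one gets a complete increasing sequence $\Delta_k$ with $\rho(\Delta_k)\to1$ for which \eqref{v6} holds a.e.

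\textbf{Step 2: proof of the concentration lemma.} The key point is that square sums are harmless: by Theorem~\ref{OT1} (C.~Fefferman \cite{Ch1}) the sums $S_{\Delta(a,a)}$ converge a.e.\ for any $f\in L^2(\ZT^2)$, so the misbehaviour of $S_{\Delta(a,b)}$ with $b$ slightly larger than $a$ is carried entirely by the differences $S_{\Delta_{i+1}}-S_{\Delta_i}$ of consecutive members of the family. For rhombi $\Delta_i=\Delta(a,b_i)$ with a common $a$ and $b_1>b_2>\cdots$ all close to $a$, the Fourier support of $S_{\Delta_{i+1}}-S_{\Delta_i}$ consists of a pair of thin triangular slivers cut between the corresponding edges; each is a plank of proportions $\asymp (1/a)\times(\Delta b/a^2)$ whose long axis has slope $-a/b_i$, i.e.\ lies at angular distance $\asymp 1-a/b_i$ from the diagonal. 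Up to a modulation and a smooth truncation transverse to the plank, the Dirichlet kernel of such a sliver is a one--dimensional Dirichlet kernel of order $\asymp 1/a$ in the plank's direction. Now fix a large $N$, take $N$ near--diagonal directions realised by legitimate rhombi with $\rho\le 1+1/\lambda$, put on appropriately located pieces of $\ZT^2$ modulated du~Bois-Reymond polynomials (normalised so that $\|P\|_\infty\le 1$) forcing the corresponding difference operator to exceed $\lambda$ on a tube $T_j$ in the $j$-th direction, and use a Besicovitch (Perron--tree) arrangement of the $T_j$ so that $\big|\bigcup_jT_j\big|\ge c_0$. Since on $T_j$ one has $|S_{\Delta_{i+1}}P-S_{\Delta_i}P|>\lambda$, hence $\max_i|S_{\Delta_i}P|>\lambda/2$, the lemma follows once $N$ is large enough relative to $\lambda$.

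\textbf{Main obstacle.} The hard part is the tension between the normalisation $\|P\|_{C(\ZT^2)}\le1$ and the fact that each $S_{\Delta}$ is an $L^2$--projection of norm $1$: for $P\in L^\infty\subset L^2$ any \emph{fixed} partial sum $S_{\Delta_i}P$, and the maximum over any \emph{bounded} number of rhombi, is already $\le\lambda$ off a set of measure $\lesssim\lambda^{-2}$. Thus positive--measure concentration is only possible through a genuinely two--dimensional overlap --- the maximum must exceed $\lambda$ on essentially disjoint sets for $\asymp N$ different rhombi, which is precisely why the \emph{variable} near--diagonal edge directions (unavailable when $\rho\equiv1$) are indispensable, and why Theorem~\ref{OT1} breaks when $\rho(\Delta_k)\to1$ rather than $\rho(\Delta_k)\equiv1$. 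Making the Besicovitch packing quantitatively compatible with the plank proportions dictated by the du~Bois-Reymond heights, with the constraint $\rho(\Delta_i)\le 1+1/\lambda$, and with the transverse truncation not destroying the logarithmic lower bound, is the technical core of the argument; the bookkeeping of Step~1 (nestedness across scales, filler squares for completeness, translations to pass from positive measure to full measure) is then routine.
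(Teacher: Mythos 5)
Your Step~1 --- reducing the theorem to a finitary ``concentration lemma'' producing a polynomial $P$ with $\|P\|_\infty\le 1$, a finite nested family of rhombi with $\rho\le 1+1/\lambda$, and a set of measure $\ge c_0$ on which $\max_i|S_{\Delta_i}P|>\lambda$, and then iterating over $\lambda_j\to\infty$ with dilations and well-chosen translations --- is sound and matches the paper's scheme. The paper proves exactly such a lemma (Lemma~\ref{L11}, with $\lambda\asymp\sqrt m$ and $\delta$ arbitrary), places the successive blocks on dyadically separated frequency scales via dilations $g_j(n_kx,n_ky)$, and upgrades positive measure to full measure by a probabilistic independence argument (Lemma~\ref{L22}), rather than by translation alone, but these are essentially the same device.

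Your Step~2 is where the proposal departs from the paper and where it has a genuine gap. First, the invocation of Besicovitch/Perron-tree is confused: that construction is engineered so that $|\bigcup_j T_j|$ is \emph{small}; it is the union of the translated or extended tubes that stays large. If all you need is $|\bigcup_j T_j|\ge c_0$, you need no Besicovitch at all, just disjoint tubes. Second, and more seriously, you do not supply a mechanism for the normalisation $\|P\|_\infty\le 1$. Each modulated du~Bois-Reymond piece $P_j$ must have size $\asymp1$ on its tube for the sliver difference operator to exceed $\lambda$ there (the multiplier of a thin sliver has $L^\infty\to L^\infty$ norm only logarithmic in the degree, so a small $\|P_j\|_\infty$ cannot produce a large output); but $P_j$ has spectrum in a thin sliver and hence oscillates with only slow spatial decay, and truncating it to $T_j$ throws the spectrum outside the sliver. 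Summing $N$ such pieces gives $\|P\|_\infty$ up to $\asymp N$, and a Besicovitch-type overlap, in which many tubes pass through a small common region, makes this worse, not better. You flag this as the ``technical core'', but it is not bookkeeping --- it is the entire difficulty, and the Kakeya template offers no resolution, because that template controls $L^p$ operator norms of a \emph{fixed} multiplier, not pointwise size on a positive-measure set for a \emph{maximal} operator applied to an $L^\infty$-normalised input.

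The paper achieves the $L^\infty$ bound by a completely different device, imported from \cite{Kar1}: the supports $E_n$ of the pieces $a_n=\ZI_{E_n}e^{i(p_nx+q_ny)}/\sqrt m$ are defined inductively with the stopping-time condition (\ref{u8}), which deletes any square where the running partial sum would exceed $\gamma$; this makes $\sum_n a_n$ automatically bounded (Lemma~\ref{L8}). A martingale square-function estimate (Lemma~\ref{L6}) and a combinatorial analysis of the boundary sets $V_n\setminus U_n$ (Lemma~\ref{L5}) show that the stopping removes only bounded measure per generation, so $\sum_n|E_n|\gtrsim m$; and the Nikishin--Ul'yanov-type rearrangement of a tree system (Lemma~\ref{L1}) converts this mass lower bound into a lower bound $\asymp\sqrt m$ for the maximal \emph{rearranged} partial sum on a set of measure $>1$ (Lemma~\ref{L7}), with the rearrangement realised as a permutation of the sectors so that the ordinary nested partial sums over rhombi see the blocks in the favourable order. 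None of these ingredients --- stopping time, tree-system rearrangement, martingale estimate --- appears in your proposal, and I do not see how the Besicovitch strategy substitutes for them.
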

An example of a function $f\in L^p(\ZT^2)$, $1\le p<\infty$, satisfying the same relation (\ref{v6}) was constructed in \cite{Kar2}. An analogous divergence theorem for Walsh-Fourier series was considered in \cite{Kar3}.

We obtain also a similar divergence theorem for some spherical sums. Let $B=B(x_0,y_0,r)$ be the open ball, with the radius $r$ and the center at the point $(x_ 0,y_0)$. We define the following quantity
\begin{equation*}
\tau(B)=\frac{\sqrt {x_0^2+y_0^2}}{r},
\end{equation*}
describing the eccentricity of the ball against the origin.  We prove
\begin{theorem}\label{T2}
There exists a continuous function $f\in C(\ZT^2)$ and a complete sequence of balls $U_k$, $k=1,2,\ldots,$ such that $\tau(U_k)\to 0$ and
\begin{equation}\label{v7}
\limsup_{k\to\infty} |S_{U_k}(x,y,f)|=\infty
 \end{equation}
almost everywhere.
\end{theorem}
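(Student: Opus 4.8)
The plan is to run the scheme that proves Theorem \ref{T1}, with balls in place of the rhombi $\Delta(a,b)$, the point being that a ball of large radius whose centre is close (relative to the radius) to the origin plays, near a boundary point, the same role as a rhombus with $\rho$ close to $1$. Recall the heuristic behind C.~Fefferman's disc multiplier phenomenon: near a point $P_0$ of its boundary circle the ball $B(x_0,y_0,r)$ is well approximated by the half-plane bounded by the tangent line at $P_0$, whose direction is orthogonal to the radius vector $P_0-(x_0,y_0)$; hence, locally, $S_B$ acts like a directional half-plane (Hilbert-transform type) multiplier in that direction. The constraint that $\tau(B)=\sqrt{x_0^2+y_0^2}/r$ be small forces the centre into the disc of radius $\tau r$ about the origin, so the tangent directions available at a prescribed location on the boundary fill an interval of length $O(\tau)$. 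This is the exact analogue of the fact that for $\Delta(a,b)$ the admissible slopes of the sides shrink to $\{\pm1\}$ as $\rho(\Delta)\to1$. A direct reduction to Theorem \ref{T1} does not seem available: for a disc $U$ and an inscribed or circumscribed rhombus $\Delta$ the symmetric difference $U\triangle\Delta$ already contains $\asymp r^2$ lattice points, so $S_U-S_\Delta$ is not small, and one must instead repeat the construction with circular arcs in place of the straight sides.

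The heart of the argument is a single-scale lemma: for every $\varepsilon>0$ and every $\delta>0$ there is a real trigonometric polynomial $p$ on $\ZT^2$ with $\|p\|_\infty\le 1$ and a finite family of balls $B_1,\dots,B_N$, all with $\tau(B_j)<\delta$, such that
\begin{equation*}
\Bigl|\Bigl\{(x,y)\in\ZT^2:\ \max_{1\le j\le N}|S_{B_j}(x,y,p)|>\varepsilon^{-1}\Bigr\}\Bigr|>(2\pi)^2(1-\varepsilon).
\end{equation*}
To build $p$ one starts from a Besicovitch--Perron-tree family of thin congruent tubes whose directions, after an affine transformation squeezing the plane into a cone of aperture $O(\delta)$, still satisfy: the tubes $\widetilde B_j$ obtained by translating each tube by its own length along its axis are essentially disjoint and cover a set of measure $\gtrsim 1$, while the original tubes cover a set of measure $<\varepsilon$. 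One takes $p$ to be the associated sum of modulated exponentials with frequency support concentrated near the duals of these tubes, and chooses the ball $B_j$ so that an arc of $\partial B_j$ passing over the $j$-th spatial tube has tangent direction equal to the direction of that tube; the half-plane approximation above then shows that $S_{B_j}p$ reproduces, up to a controlled error, the directional partial sum attached to tube $j$, which has size $\asymp\varepsilon^{-1}$ on $\widetilde B_j$. This is the same mechanism used by C.~Fefferman for the rectangular sums $S_{NM}$ and, in the rhombus form, in the proof of Theorem \ref{T1}.

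Given the single-scale lemma, the theorem follows by a standard superposition. Pick $\varepsilon_k\downarrow0$ and $\delta_k\downarrow0$, apply the lemma to obtain polynomials $p_k$ and ball families $\{B_{k,j}\}_j$, and arrange that the frequency blocks of the $p_k$ are lacunarily separated and pushed to infinity, so that $S_{B_{k,j}}p_l=p_l$ for $l<k$ and $S_{B_{k,j}}p_l=0$ for $l>k$. Set
\begin{equation*}
f=\sum_{k=1}^\infty c_k\,p_k,\qquad \sum_k c_k<\infty,\qquad c_k\varepsilon_k^{-1}\to\infty,
\end{equation*}
so that $f\in C(\ZT^2)$. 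Collect all the balls $B_{k,j}$, together with a sequence of huge balls centred at the origin (which have $\tau=0$ and serve only to make the family complete), into one sequence $U_1,U_2,\dots$ ordered scale by scale; then $\tau(U_k)\to0$ and $\bigcup_k U_k=\ZR^2$. Letting $E_k$ be the set produced at level $k$, of measure $>(2\pi)^2(1-\varepsilon_k)$, on $\limsup_k E_k$, which has full measure, one gets $|S_{U}(x,y,f)|\ge c_k\varepsilon_k^{-1}-O\!\left(\sum_{l}c_l\right)\to\infty$ along suitable balls $U$ of the $k$-th block, which is \eqref{v7} almost everywhere.

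The main obstacle is the single-scale lemma, and inside it two points. First, the approximation ``spherical partial sum $\approx$ directional half-plane multiplier near a boundary point'' must be made quantitative and uniform over the whole family $\{B_{k,j}\}$: unlike a polygonal partial sum, the Dirichlet kernel of a ball is not compactly supported, and the localisation near $P_0$ rests on the Bessel asymptotics of the disc multiplier and a stationary-phase estimate in which the curvature of $\partial B$ enters, so one must check that the narrowness $\delta_k$ of the admissible cone and the large eccentricity of the tubes do not spoil the error term. Second, one must verify that the classical Perron-tree configuration survives the affine squeeze into an $O(\delta_k)$-cone with the measure bound $\varepsilon_k$ still attainable; this only costs larger tube eccentricities, hence larger degrees for $p_k$, but presents no genuine difficulty. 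The remaining bookkeeping — lacunary separation of blocks, completion of the ball family, and the Borel--Cantelli step — is routine.
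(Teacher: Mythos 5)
Your overall architecture --- a ``single-scale'' building-block lemma giving large partial sums over a family of balls of small eccentricity, followed by a superposition of dilated copies --- is the same architecture the paper uses (Lemma \ref{L12} plus the proof of Theorem \ref{T1}/\ref{T2}). But the route you propose for the single-scale lemma is genuinely different from the paper's, and it is where the gap lies.

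You want to prove the single-scale lemma by approximating $S_B$ near a boundary point of $B$ by a directional half-plane multiplier, controlled via Bessel asymptotics of the disc kernel and a stationary-phase estimate. This is the wrong tool for the setting. On $\ZT^2$ the object $S_B(x,y,p)$ for a trigonometric polynomial $p$ is \emph{exactly} the sum $\sum_{(n,m)\in B\cap\spec p}c_{nm}e^{i(nx+my)}$: it is determined by which lattice points of $\spec p$ lie inside $B$, and there is no kernel error term to control. Either $B$ and the half-plane $H$ cut $\spec p$ the same way (and then $S_B p=S_H p$ with zero error), or they do not (and the discrepancy is a whole block of frequencies, not a small remainder). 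The oscillatory-integral machinery you invoke belongs to the continuous ball-multiplier on $L^p(\ZR^2)$, not to finite spectral sums on the torus, and the ``controlled error'' you posit is not defined. You yourself flag that you have not verified the approximation is uniform over the cone width $\delta_k$ and the tube eccentricities; that is exactly the unresolved part, and it cannot be patched by sharper asymptotics because there is no asymptotic expansion to sharpen.

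The paper avoids all of this by arranging \emph{exact} spectral separation. In Lemma \ref{L12} the polynomials $Q_n$ are the Lemma \ref{L4} polynomials $T_n$ with spectra in disjoint thin sectors $S_n$ around the vertical, translated by $e^{iRx}$ so that $\spec Q_n\subset\Theta_n=(R,0)+V_n$. The balls $B_n=B(0,-R\tan(\varepsilon/n),R/\cos\theta_n)$ are chosen tangent at $(R,0)$ to the rays bounding the $\Theta_n$, with $\tau(B_n)=\sin(\varepsilon/n)$ small. For $R$ large enough, over the bounded region where all the spectra sit, the circular arcs exactly discriminate the sectors: $\Theta_n\subset B_k$ for $k>n$ and $\Theta_n\cap B_k=\varnothing$ for $k\le n$. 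Hence $\spec Q_n\subset U_n\setminus\bigcup_{k<n}U_k$ is an identity, and $S_{U_n}$ of the sum of the $Q_j$ equals $\sum_{j\le n}Q_j$ on the nose. The Kakeya-type combinatorics that your sketch puts into a Besicovitch/Perron-tree configuration is, in the paper, entirely inside the rearrangement Lemma \ref{L1}; no analysis of the spherical Dirichlet kernel ever enters. To close the gap in your proposal you would need to replace the ``approximate half-plane multiplier'' step by the exact spectral inclusion $\spec Q_n\subset U_n\setminus\bigcup_{k<n}U_k$, which brings you back to the paper's geometric argument with large $R$.
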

In the proofs of the theorems we use the method applied in the paper \cite{Kar1},  where we establish the unboundedness of the maximal directional Hilbert transform on the plane, associated with an arbitrary infinite family of directions.

Unfortunately, we are not able to prove Theorem \ref{T2} with the condition $\tau(U_k)=0$ instead of $\tau(U_k)\to 0$. That would be a negative answer to the well known problem on almost everywhere convergence of spherical partial sums of double Fourier series.

\section{Auxiliary lemmas }
Let $\ZT=\ZR/(2\pi \ZZ)$ be the one dimensional torus and $\ZT^2=\ZT\times \ZT$. If $E$ is a Lebesgue measurable set in $\ZT$ or $\ZT^2$, then the notation $\ZI_E$ stands for the indicator function of $E$, $|E|$ denotes the Lebesgue measure of $E$. For any $n\in\ZN$ and for a measurable set $E\subset \ZT^2$ we set
\begin{equation*}
E(n)=\{(x,y)\in\ZT^2:\, (nx,ny)\in E\}.
\end{equation*}
It is clear that $|E(n)|=|E|$. The relation
\begin{equation}\label{u1}
\lim_{n\to\infty}|A\cap B(n)|=\frac{|A||B|}{4\pi^2},\quad (4\pi^2=|\ZT^2|),
\end{equation}
is well known and follows from a theorem of Fe\`{j}er (see for example \cite{Zyg}, Theorem (4.15)).  The following two lemmas are based on a standard probabilistic independence argument.

\begin{lemma}\label{L21}
Let $n_0>0$ be an arbitrary integer and $0<\alpha<1$. Then for any sequence of measurable sets $E_k\subset \ZT^2$, $k=1,2,\ldots, l$,  with $|E_k|>4\pi^2\alpha$, there exist natural numbers  $n_0<n_1<n_2<\ldots<n_l$ satisfying the condition
\begin{equation}\label{u2}
\left|\bigcup_{k=1}^lE_k(n_k)\right|>4\pi^2(1-(1-\alpha)^l).
\end{equation}
\end{lemma}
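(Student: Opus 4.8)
The plan is to pick the integers $n_1,\ldots,n_l$ one after another by a greedy procedure, using the equidistribution relation (\ref{u1}) at each step to force the freshly placed set $E_k(n_k)$ to capture a fixed proportion of the part of $\ZT^2$ not yet covered. Since the $E_k$, $1\le k\le l$, are finitely many and each satisfies $|E_k|>4\pi^2\alpha$, we may fix once and for all a number $\beta$ with $\alpha<\beta<\min_{1\le k\le l}|E_k|/(4\pi^2)$.

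Put $A_0=\emptyset$ and proceed by induction. Suppose $n_0<n_1<\cdots<n_{k-1}$ have already been chosen and set $A_{k-1}=\bigcup_{j=1}^{k-1}E_j(n_j)$. If $|A_{k-1}|=4\pi^2$, choose the remaining integers arbitrarily so that they stay increasing; then $|A_l|=4\pi^2>4\pi^2(1-(1-\alpha)^l)$ and we are done. Otherwise $|\ZT^2\setminus A_{k-1}|>0$, and since $A_{k-1}$ is now a fixed measurable set, applying (\ref{u1}) with $A=\ZT^2\setminus A_{k-1}$ and $B=E_k$ gives
\[
\lim_{n\to\infty}\bigl|(\ZT^2\setminus A_{k-1})\cap E_k(n)\bigr|=\frac{|\ZT^2\setminus A_{k-1}|\,|E_k|}{4\pi^2}>\beta\,|\ZT^2\setminus A_{k-1}|.
\]
Hence there is an integer $n_k>n_{k-1}$ with $|E_k(n_k)\setminus A_{k-1}|\ge\beta\,|\ZT^2\setminus A_{k-1}|$.

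With this choice, $|\ZT^2\setminus A_k|=|\ZT^2\setminus A_{k-1}|-|E_k(n_k)\setminus A_{k-1}|\le(1-\beta)\,|\ZT^2\setminus A_{k-1}|$, so iterating down to $A_0$ yields $|\ZT^2\setminus A_l|\le(1-\beta)^l\cdot 4\pi^2$, that is,
\[
\left|\bigcup_{k=1}^lE_k(n_k)\right|=|A_l|\ge 4\pi^2\bigl(1-(1-\beta)^l\bigr)>4\pi^2\bigl(1-(1-\alpha)^l\bigr),
\]
the last inequality because $0<1-\beta<1-\alpha<1$. This is exactly (\ref{u2}).

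There is no real obstacle here: the only points demanding care are the separation of the strict inequalities (introducing the intermediate constant $\beta$ so that the final bound is strict) and the trivial case in which the union becomes all of $\ZT^2$ before step $l$. The substantive ingredient, the independence/equidistribution fact (\ref{u1}), is already at hand.
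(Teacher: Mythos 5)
Your proof is correct and follows essentially the same approach as the paper's: both iterate the equidistribution relation~(\ref{u1}) to make the measure of the uncovered complement shrink by a fixed multiplicative factor at each step. The only cosmetic difference is bookkeeping — you absorb the strict-inequality slack into an intermediate constant $\beta$ with $\alpha<\beta<\min_k|E_k|/(4\pi^2)$ and track a uniform per-step decay $(1-\beta)$, whereas the paper tracks the exact product $|E_1^c|\cdots|E_l^c|/(4\pi^2)^{l-1}$ and adds a small $\delta>0$ to absorb the error from the finitely many limit approximations.
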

\begin{proof}
From (\ref{u1}) it follows that
\begin{align*}
\lim_{n\to\infty}|(A\cup B(n))^c|&=\lim_{n\to\infty}\left(4\pi^2-|A|-|B(n)|+|A\cap B(n)|\right)\\
&=4\pi^2-|A|-|B|+\frac{|A||B|}{4\pi^2}=\frac{(4\pi^2-|A|)(4\pi^2-|B|)}{4\pi^2}\\
&=\frac{|A^c|\cdot|B^c|}{4\pi^2}.
\end{align*}
Taking small enough $\delta>0$,  then applying this relation successively $l-1$ time, we may find integers  $1=n_1<n_2<\ldots<n_l$ such that
\begin{align*}
\left|\left(E_1\cup E_2(n_2)\cup\ldots\cup E_l(n_l)\right)^c\right|&<\frac{|E_1^c|\cdot|E_2^c|\cdot \ldots\cdot|E_l^c|}{(4\pi^2)^{l-1}}+\delta\\
&< \frac{(4\pi^2-4\pi^2\alpha)^l}{(4\pi^2)^{l-1}}=4\pi^2(1-\alpha)^l.
\end{align*}
This  immediately gives (\ref{u2}).
\end{proof}
\begin{lemma}\label{L22}
Let $E_k\subset \ZT^2$ be a sequence of measurable sets such that  $|E_k|>4\pi^2\alpha$, $k=1,2,\ldots $, where  $0<\alpha<1$. Then there exists an infinite  sequence of integers $0<n_1<n_2<\ldots $ such that
\begin{equation}\label{u3}
\left|\bigcap_{l\ge 1}\bigcup_{k\ge l}E_k(n_k)\right|=4\pi^2=|\ZT^2|.
\end{equation}
\end{lemma}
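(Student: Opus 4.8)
The plan is to slice the sequence $E_1,E_2,\dots$ into consecutive blocks of growing length and to apply Lemma \ref{L21} separately on each block, feeding into the lemma a threshold $n_0$ that exceeds every dilation index already chosen on the previous blocks. Since Lemma \ref{L21} delivers indices strictly above the prescribed threshold, this assembles into one globally increasing sequence $0<n_1<n_2<\dots$ whose restriction to each block makes the corresponding union occupy nearly all of $\ZT^2$.

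In detail, fix integers $l_1<l_2<\dots$ with $l_j\to\infty$ and partition $\ZN$ into consecutive intervals $B_1,B_2,\dots$, where $B_j$ consists of $l_j$ consecutive integers. Assume the numbers $n_k$ for $k\in B_1\cup\dots\cup B_{j-1}$ are already defined (vacuous for $j=1$) and let $N_{j-1}$ be the largest among them, with $N_0=0$. Apply Lemma \ref{L21} to the $l_j$ sets $\{E_k:\,k\in B_j\}$ with threshold $N_{j-1}$ and the same $\alpha$; it yields numbers $N_{j-1}<n_k$, $k\in B_j$, strictly increasing in $k$, such that $F_j:=\bigcup_{k\in B_j}E_k(n_k)$ satisfies $|F_j|>4\pi^2\bigl(1-(1-\alpha)^{l_j}\bigr)$, i.e. $|F_j^{\,c}|<4\pi^2(1-\alpha)^{l_j}$. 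Carrying this out for all $j$ produces $0<n_1<n_2<\dots$, and $|F_j^{\,c}|\to0$ as $j\to\infty$.

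It remains to verify two facts of measure type. First, $\limsup_j F_j\subset\bigcap_{l\ge1}\bigcup_{k\ge l}E_k(n_k)$: given $l$, choose $J$ with $\min B_J\ge l$; then every index $k$ occurring in $\bigcup_{j\ge J}F_j$ satisfies $k\ge l$, so $\bigcup_{j\ge J}F_j\subset\bigcup_{k\ge l}E_k(n_k)$, and hence $\limsup_j F_j=\bigcap_{J'}\bigcup_{j\ge J'}F_j\subset\bigcup_{k\ge l}E_k(n_k)$; since $l$ was arbitrary, the stated inclusion holds. Second, $|\limsup_j F_j|=4\pi^2$: indeed $(\limsup_j F_j)^c=\bigcup_{J\ge1}\bigcap_{j\ge J}F_j^{\,c}$, and for each fixed $J$ one has $\bigl|\bigcap_{j\ge J}F_j^{\,c}\bigr|\le\inf_{j\ge J}|F_j^{\,c}|=0$, so $(\limsup_j F_j)^c$ is a countable union of null sets. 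Combining the two facts with $|\ZT^2|=4\pi^2$ gives $\bigl|\bigcap_{l\ge1}\bigcup_{k\ge l}E_k(n_k)\bigr|=4\pi^2$, which is (\ref{u3}).

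The whole argument is bookkeeping layered on top of Lemma \ref{L21}; the only point requiring attention is keeping the dilation indices of consecutive blocks strictly ordered, which is exactly why the auxiliary threshold $n_0$ is built into the statement of Lemma \ref{L21}. Letting the block lengths $l_j$ grow — however slowly — is what forces $|F_j^{\,c}|\to0$ and thus makes the limsup set conull; a fixed block length $l$ would only yield a limsup set of measure at least $4\pi^2\bigl(1-(1-\alpha)^{l}\bigr)$, not all of $\ZT^2$.
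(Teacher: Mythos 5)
Your proof is correct and takes essentially the same route as the paper: the paper partitions the indices into the blocks $\{j: k^2 < j \le (k+1)^2\}$ of length $2k+1$ and applies Lemma~\ref{L21} on each block, which is precisely your scheme with $l_j = 2j-1$. You simply keep the block lengths abstract and spell out the measure-theoretic bookkeeping (the $\limsup$ inclusion and the null complement) that the paper compresses into a single sentence.
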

\begin{proof}
Applying Lemma \ref{L21} to the set families
\begin{equation*}
\left\{E_j:\, k^2<j\le (k+1)^2\right\},\quad k=0,1,2,\ldots,
\end{equation*}
we find integers $n_j$ satisfying
\begin{equation*}
\left|\bigcup_{j=k^2+1}^{(k+1)^2}E_j(n_j)\right|>4\pi^2(1-(1-\alpha)^{2k+1}),\quad k=0,1,\ldots.
\end{equation*}
Thus we get $|\cup_{k\ge l}E_k(n_k)|=4\pi^2$ for any $l=1,2,\ldots$, and so (\ref{u3}).
\end{proof}

\begin{lemma}\label{L}
For any function $f\in L^\infty(\ZT^2)$ we have
\begin{equation}\label{u4}
|\{(x,y)\in\ZT^2:\, |f(x,y)|>\|f\|_1/8\pi^2\}|\ge  \frac{4\pi^2}{8\pi^2(\|f\|_\infty/\|f\|_1)-1}.
\end{equation}
\end{lemma}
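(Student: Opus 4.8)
The plan is to use a one-line Chebyshev-type splitting of the integral defining $\|f\|_1$. Write $M=\|f\|_\infty$, $I=\|f\|_1$ and set the threshold $\lambda=I/(8\pi^2)$, which is exactly half of the mean value $I/(4\pi^2)$ of $|f|$ over $\ZT^2$. Denote by $E=\{(x,y)\in\ZT^2:\,|f(x,y)|>\lambda\}$ the set appearing in \eqref{u4} and put $m=|E|$; the goal is the lower bound $m\ge 4\pi^2/(8\pi^2(M/I)-1)$. Assuming $f\not\equiv 0$ (otherwise the statement is vacuous), note first that $I=\int_{\ZT^2}|f|\le M\cdot 4\pi^2$, so $M\ge I/(4\pi^2)=2\lambda>\lambda$; in particular the denominator $8\pi^2(M/I)-1\ge 1$ is positive and the asserted bound is meaningful.

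The one computation is to split the integral over $E$ and its complement, using $|f|\le M$ a.e.\ on $E$ and $|f|\le\lambda$ on the complement:
\begin{equation*}
I=\int_{\ZT^2}|f|=\int_{E}|f|+\int_{\ZT^2\setminus E}|f|\le M\,m+\lambda\,(4\pi^2-m).
\end{equation*}
Since $\lambda\cdot 4\pi^2=I/2$, this rearranges to $I\le Mm+I/2-\lambda m$, hence $I/2\le (M-\lambda)m$. Dividing by $M-\lambda>0$ and substituting $\lambda=I/(8\pi^2)$ yields
\begin{equation*}
m\ge\frac{I/2}{M-I/(8\pi^2)}=\frac{4\pi^2 I}{8\pi^2 M-I}=\frac{4\pi^2}{8\pi^2(M/I)-1},
\end{equation*}
which is exactly \eqref{u4}.

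There is no genuine obstacle here: the whole content is the distribution-function inequality $\int|f|\le\|f\|_\infty\,|\{|f|>\lambda\}|+\lambda\,|\ZT^2|$. The only point requiring a little care is the choice of the threshold as $half$ the average of $|f|$, so that the term $\lambda\cdot|\ZT^2|$ absorbs precisely $\tfrac12\|f\|_1$ and leaves the clean factor $4\pi^2$ in the numerator; together with the observation $\|f\|_\infty\ge\|f\|_1/(4\pi^2)$, which guarantees the denominator is positive, this completes the argument.
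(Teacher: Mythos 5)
Your proof is correct and is essentially identical to the paper's: both start from the splitting $\|f\|_1\le |E|\,\|f\|_\infty+(4\pi^2-|E|)\,\|f\|_1/(8\pi^2)$ and rearrange. The only addition in your write-up is the explicit check that the denominator is positive, which the paper leaves implicit.
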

\begin{proof}
Denote
\begin{equation*}
E=\{(x,y)\in\ZT^2:\, |f(x,y)|>\|f\|_1/8\pi^2\}.
\end{equation*}
We have
\begin{equation*}
\|f\|_1\le (4\pi^2-|E|)\frac{\|f\|_1}{8\pi^2}+|E|\|f\|_\infty.
\end{equation*}
After a simple transformation from this inequality we get (\ref{u4}).
\end{proof}
For any integer $n> 2$ of the form
\begin{equation*}
n=2^k+j,\quad 1\le j\le 2^k,\quad k=1,2,\ldots,
\end{equation*}
we denote
\begin{equation}\label{u5}
\bar n=2^{k-1}+\bigg[\frac{j+1}{2}\bigg],
\end{equation}
where $[\cdot ]$ stands for the integer part of a number.
A sequence of real valued functions $f_n(x,y)$, $n=2,3,\cdots , 2^m$, ($f_n\not\equiv 0$) is said to be a tree-system if
\begin{equation*}
\supp f_n\subset \{(x,y)\in \ZT^2:\, (-1)^{j+1}\cdot f_{\bar n}(x)>0\}.
\end{equation*}
The Haar system excluded the first function is the typical example of a tree-system. The following lemma was proved in \cite{Kar1}. Its Haar system case was considered in \cite{NiUl}.
\begin{lemma}\label{L1}
There exists a rearrangement $\sigma $ of the integers $\{2,3,\cdots ,2^m\}$ such that for any tree system $f_n(x,y)$, $n=2,3,\cdots ,2^m $, we have
\begin{equation*}
\sup_{2\le l\le 2^m}\left|\sum_{n=2}^{l}f_{\sigma (n)}(x,y)\right|\ge
\frac{1}{3}\sum_{n=2}^{2^m}\big|f_n(x,y)\big|.
\end{equation*}
\end{lemma}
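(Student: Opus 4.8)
The plan is to write down an explicit rearrangement obtained from a suitable in-order traversal of the binary tree underlying the index set $\{2,\dots,2^m\}$, and then to reduce the estimate to an elementary fact about partial sums of a signed sequence with at most one sign change. Fix a point $(x,y)$ and put $N=\{n:\ f_n(x,y)\neq0\}$. The support condition in the definition of a tree-system gives, for $n>2$, that $(x,y)\in\supp f_n$ forces $f_{\bar n}(x,y)\neq0$; hence a nonempty $N$ contains the root $2$ and, with each node, all of its ancestors. Since the two children of a node $m$ (the one with odd $j$ and the one with even $j$) have supports contained in the disjoint sets $\{f_m>0\}$ and $\{f_m<0\}$, at most one child of $m$ can lie in $N$. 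Thus a nonempty $N$ is a descending path $2=a_0,a_1,\dots,a_s$ with $a_{i+1}$ a child of $a_i$; and the same disjointness pins down the sign of $f_{a_i}(x,y)$ for each $i<s$, namely $+$ when $a_{i+1}$ is the odd-$j$ child of $a_i$ and $-$ when it is the even-$j$ child. Only the sign of the final term $f_{a_s}(x,y)$ remains free.

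Now call the odd-$j$ child of a node its left child and the even-$j$ child its right child, and let $\sigma$ list $\{2,\dots,2^m\}$ in the traversal that visits, at every node, first the whole right subtree, then the node, then the whole left subtree. I would then prove, by induction on $s$, the following: in the order induced by $\sigma$, a node $a_i$ ($i<s$) of a root path precedes every node of the block $\{a_{i+1},\dots,a_s\}$ exactly when $a_i\to a_{i+1}$ is a step to the left child (equivalently, when the forced sign of $f_{a_i}(x,y)$ is $+$), and follows that entire block when it is a step to the right child. The proof is immediate: $a_{i+1},\dots,a_s$ all lie in one of the two subtrees of $a_i$, and in the traversal that subtree sits entirely on one side of $a_i$. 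Consequently the nodes of the path appear in $\sigma$-order grouped as: all the forced ``$+$'' nodes, then $a_s$, then all the forced ``$-$'' nodes; so, taking into account the free sign of $f_{a_s}(x,y)$, the nonzero numbers among $f_{\sigma(2)}(x,y),\dots,f_{\sigma(2^m)}(x,y)$, read in this order, form a sequence with at most one sign change.

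It remains to combine this with the elementary inequality: if real numbers $v_1,\dots,v_\nu$ have at most one sign change, then $\max_{1\le k\le\nu}\bigl|\sum_{i=1}^{k}v_i\bigr|\ge\frac{1}{3}\sum_{i=1}^{\nu}|v_i|$. (If all the $v_i$ have the same sign the full sum already equals $\pm\sum|v_i|$; otherwise, writing $P$ and $Q$ for the $\ell^1$-masses of the first and second sign blocks, the partial sums reach $\pm P$ and end at $\pm(P-Q)$, and $\max(P,|P-Q|)\ge\frac13(P+Q)$ since failure of $P\ge\frac13(P+Q)$ forces $Q\ge 2P$ and then $Q-P\ge\frac13(P+Q)$.) Applying this to $v_1,\dots,v_\nu$ equal to the nonzero values $f_{\sigma(n)}(x,y)$ in $\sigma$-order, and using that $\sum_i|v_i|=\sum_{n=2}^{2^m}|f_n(x,y)|$ while each $v_1+\dots+v_k$ is $\sum_{n=2}^{l}f_{\sigma(n)}(x,y)$ for an appropriate $l$, yields the bound at $(x,y)$; as $(x,y)$ is arbitrary, the lemma follows. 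I expect the only point needing care to be the verification that the induced order on a path has exactly the two-block shape above — this is precisely where the ``right subtree first'' convention is used.
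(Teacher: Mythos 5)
Your proof is correct. Note that the paper does not itself prove Lemma~\ref{L1}; it defers to \cite{Kar1} (and to \cite{NiUl} for the Haar case), so there is no in-paper argument to compare against, and your write-up supplies a self-contained proof. Your structure is the natural one for this tree-rearrangement statement: at a fixed $(x,y)$ the indices with $f_n(x,y)\neq0$ form a rooted path $a_0,\dots,a_s$ (closure under ancestors plus the fact that at most one of the two children, whose supports lie in the disjoint sets $\{f_{\bar n}>0\}$ and $\{f_{\bar n}<0\}$, can be active); the sign condition pins $\sign f_{a_i}(x,y)$ for $i<s$; and the reverse in-order traversal puts, for each $i<s$, the entire tail block $\{a_{i+1},\dots,a_s\}$ on the side of $a_i$ dictated by that forced sign, so the nonzero values in $\sigma$-order form a ``$+$'' block, then $a_s$, then a ``$-$'' block, hence have at most one sign change. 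The closing elementary inequality $\max(P,|P-Q|)\ge\tfrac13(P+Q)$ is also correct (and sharp at $Q=2P$). One small remark: the induction on $s$ you announce is not actually needed — the single observation that each tail block $\{a_{i+1},\dots,a_s\}$ lies wholly in one subtree of $a_i$, hence wholly on one side of $a_i$ in the traversal, already yields the two-block order, since for $i<j<s$ with $a_i$ of type ``$+$'' and $a_j$ of type ``$-$'' it gives $a_i$ before $a_s$ and $a_j$ after $a_s$.
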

For any $p$ integer we denote
\begin{align*}
&\delta_p^i=\left(\frac{2\pi (i-1)}{|p|},\frac{2\pi i}{|p|}\right),\\
&\delta_p^{i,j}=\delta_p^{i}\times \delta_p^{j}=\left(\frac{2\pi (i-1)}{|p|},\frac{2\pi i}{|p|}\right)\times \left(\frac{2\pi (j-1)}{|p|},\frac{2\pi j}{|p|}\right),\, i,j\in \ZZ,\\
&Q_p=\{\delta_p^{i,j}:\, 1\le i,j\le |p|\}.
\end{align*}
Given number $\gamma>1$ and sequence of integers $p_n$, $n=2,3,\ldots ,\nu=2^m$,  such that $1\le |p_2|<|p_3|<\ldots <|p_\nu|$ and $p_n|p_{n+1}$($p_n$ divides $p_{n+1}$) we associate the function system
\begin{equation}\label{u6}
a_n(x,y)=a_n(x,y)=\frac{\ZI_{E_n}(x,y)}{\sqrt{m}}e^{i(p_nx+q_n y)},\quad n\ge 2,
\end{equation}
where $E_n$ is defined to be the union of all rectangles $\delta\in Q_{p_n}$ satisfying the conditions
\begin{align}
&\overline{\delta}\subset \{(x,y)\in E_{\bar n} : (-1)^{j+1}
\cos (p_{\bar n}x+q_{\bar n}y)>0\},\label{u7}\\
 &\sup_{(x,y)\in\delta}\left| \sum_{k=1}^{\bar n} a_k(x,y)+\frac{e^{i(p_nx+q_n y)}}{\sqrt{m}}\right|\le \gamma.\label{u8}
\end{align}
Note that some of the sets $E_n$ can be empty. Besides, for the further convenience, we also assume that
\begin{equation*}
E_n=\varnothing, \quad n> \nu=2^m.
\end{equation*}
For any $k=1,2,\ldots,m-1$ we consider the collection
\begin{equation*}
\mathcal{E}_k=\{E_n:\,2^k<n\le 2^{k+1}\}.
\end{equation*}
The following relations give structural characterization of the sets $E_n$:
\begin{align}
&E_n\cap E_{n'}=\varnothing,\quad 2^k<n<n'\le 2^{k+1},\label{u9}\\
&E_n\in \mathcal{E}_k\Rightarrow E_{2n-1}, E_{2n}\in  \mathcal{E}_{k+1},\\
&E_{2n-1}\cup E_{2n}\subset E_n.\label{u11}
\end{align}
  Obviously the relations (\ref{u6})-(\ref{u8}) define the system (\ref{u6}) uniquely.
From (\ref{u9}) and (\ref{u11}) we obtain
\begin{equation}\label{u12}
\sum_{k=2}^\nu| a_n(x,y)|\le \sum_{k=2}^\nu\frac{\ZI_{E_n}(x,y)}{\sqrt m}\le \sqrt m.
\end{equation}
Thus we conclude that if $\gamma\ge \sqrt m$, then the condition (\ref{u8}) holds for any $\delta$, and so $ a_n(x,y)$ doesn't depend on $\gamma$. In this case the set $E_n$ and the function $a_n(x,y)$ will be denoted by $F_n$ and $b_n(x,y)$ respectively.
\begin{lemma}\label{L6}
If $p_n|p_{n+1}$ and $|p_{n+1}|\ge |p_n| \sqrt m $, $n=2,3,\ldots,\nu-1$, then
\begin{equation}\label{u13}
\left|\left\{\max_{2\le n\le \nu}\left|\sum_{k=2}^n b_k(x,y)\right|>\lambda \right\}\right|\le \frac{c}{\lambda},\quad \lambda >0,
\end{equation}
where $c>0$ is an absolute constant.
\end{lemma}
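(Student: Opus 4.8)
The plan is to establish the $L^2$ maximal estimate
$\big\|\max_{2\le n\le\nu}\big|\sum_{k=2}^n b_k\big|\big\|_{L^2(\ZT^2)}\le c_0$
with $c_0$ an absolute constant; then $(\ref{u13})$ follows at once, since this bound gives $|\{\,\cdot>\lambda\}|\le c_0^2/\lambda^2\le c_0^2/\lambda$ for $\lambda\ge 1$, while for $0<\lambda<1$ one trivially has $|\{\,\cdot>\lambda\}|\le|\ZT^2|=4\pi^2\le 4\pi^2/\lambda$.

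The first step is to replace the partial sums $S_n=\sum_{k=2}^n b_k$ by the level sums. Put $B_0=b_2$, $B_j=\sum_{2^j<k\le 2^{j+1}}b_k$ for $1\le j\le m-1$, and $T_l=\sum_{j=0}^l B_j$. By $(\ref{u9})$ the sets $F_k$ with $2^j<k\le 2^{j+1}$ are pairwise disjoint, so at each point at most one summand of $B_j$ is nonzero, and as $n$ runs over $(2^j,2^{j+1}]$ the value $S_n(x,y)$ takes only the two values $T_{j-1}(x,y)$ and $T_j(x,y)$; hence $\max_{2\le n\le\nu}|S_n|\le\max_{0\le l\le m-1}|T_l|$, and it suffices to bound the right side in $L^2$. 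Two facts will be used. First, since the $F_k$ at level $j$ are disjoint, $\|B_j\|_2^2=\sum_{2^j<k\le 2^{j+1}}\|b_k\|_2^2=\sum_k|F_k|/m\le 4\pi^2/m$, so $\sum_{j=0}^{m-1}\|B_j\|_2^2=\sum_{k=2}^\nu\|b_k\|_2^2\le 4\pi^2$. Second, let $\mathcal T_j$ be the $\sigma$-algebra generated by $Q_{p_{2^{j+1}}}$; because $p_n\mid p_{n+1}$ the grids $Q_{p_{2^{j+1}}}$ are nested, so $(\mathcal T_j)$ is a filtration, and $\mathbb E[B_j\mid\mathcal T_{j-1}]=0$. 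Indeed, for $k$ at level $j$ the set $F_k$ is a union of cells of $Q_{p_k}$, which refines $Q_{p_{2^j}}$ since $p_{2^j}\mid p_k$; on every such cell $\delta$ one has $\int_\delta e^{i(p_kx+q_ky)}\,dx\,dy=\big(\int e^{ip_kx}\,dx\big)\big(\int e^{iq_ky}\,dy\big)=0$, the $x$-integral running over one full period of $e^{ip_kx}$; integrating over the cells $\delta\subset F_k\cap C$ for a cell $C$ of $Q_{p_{2^j}}$ gives $\int_C B_j=0$.

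It then follows that $d_j:=\mathbb E[B_j\mid\mathcal T_j]$ is a martingale difference sequence for $(\mathcal T_j)$ with $\sum_j\|d_j\|_2^2\le\sum_j\|B_j\|_2^2\le 4\pi^2$, so Doob's maximal inequality gives $\big\|\max_l\big|\sum_{j\le l}d_j\big|\big\|_2\le 2\big(\sum_j\|d_j\|_2^2\big)^{1/2}\le 4\pi$. For the defect $e_j:=B_j-d_j$, for which $\sum_j\|e_j\|_2^2\le\sum_j\|B_j\|_2^2\le 4\pi^2$, one writes $e_j=f_j+g_j$ with $f_j:=\mathbb E[B_j\mid\mathcal T_{j+1}]-\mathbb E[B_j\mid\mathcal T_j]$ and $g_j:=B_j-\mathbb E[B_j\mid\mathcal T_{j+1}]$: then $f_j$ is $\mathcal T_{j+1}$-measurable with $\mathbb E[f_j\mid\mathcal T_j]=0$, so $\{f_j\}$ is a martingale difference sequence for $(\mathcal T_{j+1})_j$, and since $\|f_j\|_2\le\|e_j\|_2+\|g_j\|_2$ a second application of Doob yields $\big\|\max_l\big|\sum_{j\le l}f_j\big|\big\|_2\lesssim 1$ once $\sum_j\|g_j\|_2^2\lesssim 1$. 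This last point is exactly where the hypothesis $|p_{n+1}|\ge|p_n|\sqrt m$ enters: iterating it gives $|p_{2^{j+2}}|\ge m\,|p_{2^{j+1}}|\ge m\,|p_n|$ for every $n$ at level $j$, so (and $G_j$ being a union of cells of $Q_{p_{2^{j+2}}}$ there is no boundary term) on such a cell the modulation $e^{i(p_nx+q_ny)}$ varies by $O(1/m)$ — using also that $|q_n|$ stays comparable to $|p_n|$, as built into the construction — whence $\|g_j\|_\infty\lesssim m^{-3/2}$, $\|g_j\|_2^2\lesssim m^{-3}$, and summing over the $m$ levels $\sum_j\|g_j\|_2^2\lesssim m^{-2}$, $\sum_j\|g_j\|_2\lesssim m^{-1/2}$; the latter also bounds $\big\|\max_l\big|\sum_{j\le l}g_j\big|\big\|_2$. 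Collecting the three pieces gives $\|\max_n|S_n|\|_2\le\|\max_l|T_l|\|_2\le c_0$, and the lemma follows as above.

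The main obstacle is precisely that $\{B_j\}$ is not itself a martingale difference sequence: the oscillatory factors $e^{i(p_nx+q_ny)}$ are genuinely not measurable for any of the dyadic grids $Q_p$, so Doob cannot be applied directly. The device of projecting each $B_j$ onto the correct scale $\mathcal T_j$ to extract a true martingale difference $d_j$, and then disposing of the projection error $e_j$ by splitting it into one more martingale difference $f_j$ plus a remainder $g_j$ that the super-lacunarity $|p_{n+1}|\ge|p_n|\sqrt m$ makes summably small over the $m$ levels, is the heart of the argument — the constant $\sqrt m$ being chosen exactly so that this remainder stays bounded.
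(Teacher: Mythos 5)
Your strategy matches the paper's in spirit — martingale approximation, Doob's $L^2$ maximal inequality, remainder controlled by the super\-lacunarity $|p_{n+1}|\ge|p_n|\sqrt m$ — but there is a genuine gap in the remainder estimate. You condition on the \emph{two\-dimensional} $\sigma$-algebras generated by the square grids $Q_{p_{2^{j+1}}}$, so bounding $g_j=B_j-\mathbb{E}[B_j\mid\mathcal{T}_{j+1}]$ requires bounding the oscillation of the modulation $e^{i(p_kx+q_ky)}$ over a square cell of side $2\pi/p_{2^{j+2}}$, which is of order $(|p_k|+|q_k|)/p_{2^{j+2}}$. The lacunarity hypothesis makes $|p_k|/p_{2^{j+2}}$ small, but the hypotheses of Lemma \ref{L6} say \emph{nothing} about the $q_n$, so the $y$-contribution $|q_k|/p_{2^{j+2}}$ cannot be estimated from the stated assumptions. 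Your parenthetical that ``$|q_n|$ stays comparable to $|p_n|$, as built into the construction'' is not a hypothesis of the lemma — and it is actually false in the application in Lemma \ref{L12}, where the sectors hug the $y$-axis and $|q_n|\gg|p_n|$. The usable control on $q_n$ comes from (\ref{u17}), but that belongs to Lemma \ref{L5}, not Lemma \ref{L6}; as written, your argument proves a weaker statement than the lemma asserts.

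The paper avoids this entirely by conditioning in the $x$-variable only, with $y$ held fixed: its $f_n$ in (\ref{u14}) averages $b_n$ over the one\-dimensional intervals $\delta_{p_{n+1}}^j$, so $e^{iq_ny}$ is a constant at each fixed $y$ and drops out of the oscillation estimate, which then needs only $|p_{n+1}|\ge|p_n|\sqrt m$. This also makes the decomposition simpler: no grouping into level sums $B_j$ and no three\-piece split $d_j+f_j+g_j$ is required — the sequence $f_n$ is already a martingale difference sequence in $x$ for the filtration of $x$-intervals of $Q_{p_{n+1}}$, and the single remainder $b_n-f_n$ is handled by (\ref{u12}) together with the lacunarity. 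Replacing your $\mathcal{T}_j$ by $\sigma$-algebras generated by $x$-intervals alone (trivial in $y$), applying Doob in $x$ pointwise in $y$, and then integrating in $y$ would close the gap and essentially reproduce the paper's proof.
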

\begin{proof}
Without loss of generality we can assume that $p_n>0$, $n=2,3,\ldots,\nu$. Since $F_n$ consists of squares from $Q_{p_n}$, from (\ref{u6}) we get
\begin{equation*}
\int_{\delta_{p_n}^j} b_n(t,y)dt=0.
\end{equation*}
Define
\begin{equation}\label{u14}
f_n(x,y)=\frac{p_{n+1}}{2\pi}\sum_{j=1}^{p_{n+1}}\left[\int_{\delta_{p_{n+1}}^j} b_n(t,y)dt\cdot \ZI_{\delta_{p_{n+1}}^j}(x)\right]
\end{equation}
for $\, n=2,3,\ldots,\nu$, where $p_{\nu+1}>p_\nu \sqrt m$ is taken arbitrarily such that $p_\nu|p_{\nu+1}$. From the definition of $b_n(x,y)$ (see (\ref{u6})-(\ref{u8}) in the case $\gamma>\sqrt m$) we conclude
\begin{equation}\label{u15}
\supp f_n\subset  \supp  b_n\subset F_n,\quad n=2,3,\ldots,\nu.
\end{equation}
For a fixed $y$ the function $f_n(x,y)$ is constant on each interval $\delta_{p_{n+1}}^j$, $j=1,2,\ldots, p_{n+1}$, with respect to the variable $x$ and we have
\begin{align*}
\int_{\delta_{p_{n+1}}^j}f_{n+1}(t,y)dt&=\sum_{i:\,\delta_{p_{n+2}}^i\subset \delta_{p_{n+1}}^j}\int_{\delta_{p_{n+2}}^i} b_{n+1}(t,y)dt\\
&=\int_{\delta_{p_{n+1}}^i} b_{n+1}(t,y)dt=0.
\end{align*}
This means that the functions (\ref{u14}) form a martingale difference sequence with respect to $x$. Thus, applying a well known martingale inequality and then (\ref{u9}) and (\ref{u11}), we obtain
\begin{align}
\int_{\ZT^2}\max_{2\le n\le  \nu}\left|\sum_{k=2}^n f_k(x,y)\right|^2dxdy&\lesssim \sum_{k=2}^\nu \|f_k\|_2^2\le \sum_{k=2}^\nu \|b_k\|_2^2\label{u16}\\
&\le \frac{1}{m}\sum_{k=2}^\nu |E_k|\le 1.\nonumber
\end{align}
On the other hand if $x\in  \delta_{p_{k+1}}^j$, then
\begin{align}
|f_k(x,y)- b_k(x,y)|&\le\frac{p_{k+1}}{2\pi}\int_{\delta_{p_{k+1}}^j}| b_k(t,y)- b_k(x,y)|dt\\
&\le \sup_{|t-x|\le 2\pi/p_{k+1}}\frac{|\cos p_kt-\cos p_kx|}{\sqrt m}\le \frac{2\pi p_k}{p_{k+1}\sqrt m}\le \frac{2\pi}{m}.
\end{align}
Combining this with (\ref{u12}) and (\ref{u15}), we get
\begin{equation*}
\sum_{k=2}^\nu|f_k(x,y)- b_k(x,y)|\le \frac{2\pi}{m}\sum_{k=2}^\nu \ZI_{F_k}(x,y)\le 2\pi .
\end{equation*}
This together with (\ref{u16}) derives
\begin{align*}
\int_{\ZT^2}\max_{2\le n\le \nu}&\left|\sum_{k=2}^n b_k(x,y)\right|^2dxdy\\
&\le2\int_{\ZT^2}\max_{2\le n\le \nu}\left|\sum_{k=2}^nf_k(x,y)\right|^2dxdy+8\pi^2\lesssim1.
\end{align*}
Then, using Chebyshev's inequality, from this we will get (\ref{u13}).
\end{proof}
\begin{lemma}\label{L5}
Let $c$ be the constant from (\ref{u13}) and $\gamma\ge c+2$. Then if $2p_{\bar n}|p_n$ and
\begin{equation}\label{u17}
|p_{n}|>20\nu(|p_{\bar n}|+|q_{\bar n}|),
\end{equation}
then
\begin{equation}\label{u18}
\sum_{n=2}^{\nu } \int_{\ZT^2}   |\Re ( a_n(x,y))|dxdy>2\sqrt m.
\end{equation}
\end{lemma}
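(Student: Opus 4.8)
The plan is to translate (\ref{u18}) into a lower bound for $\sum_{n=2}^\nu|E_n|$ and then estimate that sum by descending the tree, controlling the mass lost at each passage from a set to its two children. Since each $E_n$ is a union of squares $\delta\in Q_{p_n}$, and on any such $\delta$ Fubini together with $\int_0^{2\pi}|\cos u|\,du=4$ gives $\int_\delta|\cos(p_nx+q_ny)|\,dxdy=\tfrac2\pi|\delta|$, we get
\[
\int_{\ZT^2}|\Re a_n(x,y)|\,dxdy=\frac1{\sqrt m}\int_{E_n}|\cos(p_nx+q_ny)|\,dxdy=\frac{2\,|E_n|}{\pi\sqrt m},
\]
so (\ref{u18}) is equivalent to $\sum_{n=2}^\nu|E_n|>\pi m$. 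Now $E_2=\ZT^2$, and by (\ref{u9}) the numbers $L_k:=\sum_{n\in\mathcal E_k}|E_n|=\bigl|\bigcup_{n\in\mathcal E_k}E_n\bigr|$, $1\le k\le m-1$, satisfy $\sum_{n=2}^\nu|E_n|=4\pi^2+\sum_{k=1}^{m-1}L_k$; by (\ref{u10})--(\ref{u11}) the unions $\bigcup_{n\in\mathcal E_k}E_n$ are nested decreasing inside $E_2$, so $L_k\ge L_{m-1}$ for every $k$. Hence it suffices to prove that the set of points of $\ZT^2$ leaving the tree before level $m-1$ has measure $<2\pi^2$, for then $L_{m-1}>2\pi^2$ and $\sum_{n=2}^\nu|E_n|>4\pi^2+2\pi^2(m-1)>\pi m$.

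A point $(x,y)\in E_{\bar n}$ leaves at the passage to $E_{2\bar n-1}\cup E_{2\bar n}$ exactly when the square of $Q_{p_{2\bar n-1}}$ (resp.\ $Q_{p_{2\bar n}}$) containing it is rejected, i.e.\ when either (a) $\cos(p_{\bar n}x+q_{\bar n}y)$ vanishes somewhere on its closure $\overline\delta$, or (b) that square violates (\ref{u8}). The structural relations (\ref{u9})--(\ref{u11}) imply that for $(x,y)\in E_{\bar n}$ only the ancestors $j\in A(\bar n)=\{\bar n,\overline{\bar n},\dots\}$ of $\bar n$ satisfy $(x,y)\in E_j$, whence on $E_{\bar n}$
\[
\sum_{k=2}^{\bar n}a_k(x,y)=\frac1{\sqrt m}\sum_{j\in A(\bar n)}e^{i(p_jx+q_jy)}=:g_{\bar n}(x,y),
\]
and the same identity holds with $b_k$ and $F_{\bar n}\supset E_{\bar n}$ in place of $a_k$ and $E_{\bar n}$ (the sets $F_n$ obey the same relations (\ref{u9})--(\ref{u11})). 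Accordingly the "lost" set is covered by an (a)-part and a (b)-part, summed over all levels, and it remains to bound each.

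For (a): by (\ref{u17}) the rejected square has side $<2\pi/\bigl(20\nu(|p_{\bar n}|+|q_{\bar n}|)\bigr)$, so $\overline\delta\subset\{|\cos(p_{\bar n}x+q_{\bar n}y)|<\varepsilon_{\bar n}\}$ with $\varepsilon_{\bar n}<\pi/(10\nu)$; this set meets $E_{\bar n}$ with density $\le\varepsilon_{\bar n}$, and summing over the disjoint $E_{\bar n}\in\mathcal E_k$ and then over $k$ bounds the total (a)-loss by $<\pi^2$. For (b): violation of (\ref{u8}) forces $\sup_{\overline\delta}|g_{\bar n}|>\gamma-1$ (the term $e^{i(p_nx+q_ny)}/\sqrt m$ has modulus $\le1$); since (\ref{u17}) makes $|p_j|$ and $|q_j|$ decay geometrically with ratio $>20\nu$ along $A(\bar n)$, the sum $g_{\bar n}$ oscillates on $\overline\delta$ by less than $1$, hence $|g_{\bar n}|>\gamma-2\ge c$ on all of $\overline\delta$; by the $b$-version of the identity above this means $|\sum_{k=2}^{\bar n}b_k|>c$ on $\overline\delta$. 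Therefore every square rejected for reason (b), at every level, lies in $\bigl\{\max_{2\le l\le\nu}\bigl|\sum_{k=2}^l b_k\bigr|>c\bigr\}$, a set of measure $\le c/c=1$ by (\ref{u13}). Hence the total loss is $<\pi^2+1<2\pi^2$, which gives $L_{m-1}>2\pi^2$ and finishes the proof.

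The delicate step is (b). One must notice that the partial phase $\sum_{k=2}^{\bar n}a_k$ appearing in (\ref{u8}) coincides on $E_{\bar n}$ with the strongly lacunary exponential sum $g_{\bar n}$ (and similarly for $b_k$ on $F_{\bar n}$), that $g_{\bar n}$ is essentially constant at the scale $1/|p_n|$ forced by (\ref{u17}), and—most importantly—that the (b)-rejections from all $m$ levels are captured simultaneously by the single set $\{\max_l|\sum_{k=2}^l b_k|>c\}$, so that Lemma~\ref{L6} is used only once and produces a bound independent of $m$; the hypothesis $\gamma\ge c+2$ is calibrated precisely so that this bound is taken at $\lambda=c$ in (\ref{u13}).
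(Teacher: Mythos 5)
Your proof is correct and follows the same overall scheme as the paper: reduce (\ref{u18}) to a lower bound on $\sum_n|E_n|$, exploit the nesting and level-wise disjointness (\ref{u9})--(\ref{u11}) to reduce further to a lower bound on the mass at the deepest level, and then bound the two kinds of loss — squares meeting the zero set of $\cos(p_{\bar n}x+q_{\bar n}y)$, and squares rejected by the capping condition (\ref{u8}) — the second by Lemma~\ref{L6}.

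There are, however, two places where your treatment genuinely differs from the paper's, and both are worth flagging.  For the zero-set loss the paper proves the sharper per-node bound $|V_n\setminus U_n|<1/\nu$ via a Diophantine-flavoured counting argument ((\ref{u27})--(\ref{u36})): it shows that inside each horizontal run of $p_n/(2p_{\bar n})$ small squares at most a $1/\nu$ fraction can meet $\{\cos(p_{\bar n}x+q_{\bar n}y)=0\}$, and this requires the divisibility $2p_{\bar n}\mid p_n$ together with (\ref{u17}).  You replace this by the one-line observation that all such squares lie in $\{|\cos(p_{\bar n}x+q_{\bar n}y)|<\varepsilon_{\bar n}\}$, $\varepsilon_{\bar n}<\pi/(10\nu)$, and that this set has density $\lesssim\varepsilon_{\bar n}$ inside $E_{\bar n}$ because $E_{\bar n}$ is a union of $Q_{p_{\bar n}}$-squares on each of which $p_{\bar n}x$ runs through a full period; this is simpler, gives a worse constant ($<\pi^2$ rather than $\le 1$), but more than suffices.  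For the capping loss the paper writes the bound (\ref{u42}) directly in terms of $\max_n|\sum_{k\le n}a_k|$ and then quotes Lemma~\ref{L6}, which is stated for the uncapped $b_k$; the step $a_k\leadsto b_k$ is left implicit there.  You make this transfer explicit by observing that on $E_{\bar n}$ the partial sums $\sum_{k\le\bar n}a_k$ and $\sum_{k\le\bar n}b_k$ both equal the lacunary exponential sum $g_{\bar n}$ over the ancestor chain $A(\bar n)$ — this is exactly the missing justification, and is the most valuable point in your write-up.

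One small gap: you assert that a point of $E_{\bar n}$ leaves the tree \emph{exactly} when (a) or (b) occurs, but there is a third way for (\ref{u7}) to fail: the closed square $\overline\delta$ may contain a point outside the open set $E_{\bar n}$ even though $\delta\subset E_{\bar n}$ (the ``boundary subdivisions'' of the $Q_{p_{\bar n}}$-cells that make up $E_{\bar n}$).  The paper's characterization (\ref{u26}) of $V_n\setminus U_n$ has the same blind spot, so this is not a failure peculiar to you, but it should be patched: since $p_n>20\nu\,p_{\bar n}$, the total measure of such boundary strips inside $E_{\bar n}$ is $\lesssim |E_{\bar n}|\,p_{\bar n}/p_n<|E_{\bar n}|/(5\nu)$, which, summed over the disjoint $E_{\bar n}$ at each level and then over the $m-1$ levels, is $O\bigl((m-1)/2^m\bigr)$ and therefore still leaves your budget $<2\pi^2$ intact.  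Finally, note that your normalization $\int_\delta|\cos(p_nx+q_ny)|\,dxdy=\tfrac{2}{\pi}|\delta|$ is the correct one; the paper's chain (\ref{u44}) asserts $\tfrac1\pi|\delta|$ (valid only when $q_n=p_n$), a slip that does not affect the paper's conclusion but that you have quietly corrected.
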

\begin{proof}
Observe that we may assume $p_n,q_n>0$, $n=2,3,\ldots,\nu$. The proof of (\ref{u18}) is based on the bound
\begin{equation}\label{u19}
\left|\bigcup_{n=2^{m-1}+1}^{2^m}E_n\right|>10.
\end{equation}
In order to prove (\ref{u19}) we define the sets  $U_n$ and $V_n$ to be the union of all squares $\delta\in Q_{p_n}$ satisfying the condition
\begin{align}
 &\bar\delta\subset \{(x,y)\in E_{\bar n}:\, (-1)^{j+1}\cos (p_{\bar n}x+q_{\bar n}y)>0\},\label{u20}\\
 &\bar\delta\cap\{(x,y)\in E_{\bar n}:\, (-1)^{j+1}\cos (p_{\bar n}x+q_{\bar n}y)>0\} \neq \varnothing,\label{u21}
\end{align}
respectively. We claim the following relations:
\begin{align}
&E_n\subset U_n \subset  E_{\bar n}\cap V_n,\label{u22}\\
&E_n\subset \overline{V_{2n}}\cup\overline{ V_{2n-1}},\label{u24}\\
&|V_n\setminus U_n|<\frac{1}{\nu},\label{u23}\\
&U_n\setminus E_n\subset \left\{(x,y)\in \ZT^2:\, \max_{1\le n\le \nu}\left|\sum_{k=1}^{n} a_k(x,y)\right|>\gamma-2\right\}.\label{u25}
\end{align}
The inclusion (\ref{u22}) immediately follows from the definitions of $E_n$, $U_n$ and $V_n$ (see (\ref{u7}), (\ref{u20}), (\ref{u21})).

From (\ref{u5}) it easily follows that $\overline{2n-1}=\overline{2n}=n$. Thus by the definition $V_{2n}$ and $V_{2n-1}$ are the union of squares $\delta\in Q_{p_n}$ satisfying respectively
\begin{align*}
&\bar\delta\cap\{(x,y)\in E_n:\, \cos (p_{\bar n}x+q_{\bar n}y)>0\} \neq \varnothing,\\
&\bar\delta\cap\{(x,y)\in E_n:\, \cos (p_{\bar n}x+q_{\bar n}y)<0\} \neq \varnothing.
\end{align*}
This implies (\ref{u24}).

To prove (\ref{u23}) we note that the set $V_n\setminus U_n$ ($n=2^k+j$) consists of the squares $\delta\in Q_{p_n}$ satisfying
\begin{equation}\label{u26}
\bar\delta\cap \{(x,y)\in E_{\bar n}:\, (-1)^{j+1}\cos (p_{\bar n}x+q_{\bar n}y)=0\}\neq \varnothing.
\end{equation}
Then observe that if
\begin{equation}\label{u27}
\delta_{p_n}^{\alpha_1,\beta}\subset  V_n\setminus U_n
\end{equation}
and the integer $\alpha_2$ satisfies
\begin{equation}\label{u28}
\alpha_1+\frac{p_n}{2 p_{\bar n}}\cdot \frac{1}{3\nu}+1<\alpha_2<\alpha_1+\frac{p_n}{2p_{\bar n}}\left(1-\frac{1}{3\nu}\right)-1,
\end{equation}
then
\begin{equation}\label{u29}
\delta_{p_n}^{\alpha_2,\beta}\cap (V_n\setminus U_n) = \varnothing .
\end{equation}
Indeed, suppose we have (\ref{u27}), (\ref{u28}), and besides (\ref{u29}) doesn't hold. Then according to (\ref{u26}) there are points
\begin{equation}\label{u30}
(x_1,y_1)\in \overline{\delta_{p_n}^{\alpha_1,\beta}},\quad (x_2,y_2)\in \overline{\delta_{p_n}^{\alpha_2,\beta}},
\end{equation}
such that $\cos (p_{\bar n}x_1+q_{\bar n}y_1)=\cos (p_{\bar n}x_2+q_{\bar n}y_2)=0$ and therefore
\begin{equation*}
p_{\bar n}x_1+q_{\bar n}y_1=\frac{\pi}{2}+\pi l_1,\quad p_{\bar n}x_2+q_{\bar n}y_2=\frac{\pi}{2}+\pi l_2,
\end{equation*}
for some integers $l_1$ and $l_2$. Thus we will get
\begin{equation}\label{u31}
p_{\bar n}(x_2-x_1)+q_{\bar n}(y_2-y_1)=\pi (l_2-l_1).
\end{equation}
From (\ref{u17}), (\ref{u28}) and (\ref{u30}) we derive
\begin{multline}\label{u32}
|p_{\bar n}(x_2-x_1)+q_{\bar n}(y_2-y_1)|\le\frac{2\pi p_{\bar n}(\alpha_2-\alpha_1+1)}{p_n}+\frac{2\pi q_{\bar n}}{p_n}\\
< \pi\left(1-\frac{1}{3\nu}\right)+\frac{\pi}{10\nu}<\pi.
\end{multline}
Combining (\ref{u31}) and (\ref{u32}), we get $l_1=l_2$ and therefore
\begin{equation}\label{u33}
p_{\bar n}(x_2-x_1)=q_{\bar n}(y_1-y_2).
\end{equation}
On the other hand, using (\ref{u17}), (\ref{u28}) and (\ref{u30}), we have
\begin{equation}\label{u34}
q_{\bar n}(y_1-y_2)\le \frac{2\pi q_{\bar n}}{p_n}<\frac{\pi}{10\nu}
\end{equation}
and
\begin{equation}\label{u35}
p_{\bar n}(x_2-x_1)>\frac{2\pi p_{\bar n}(\alpha_2-\alpha_1-1)}{p_n}>\frac{2\pi p_{\bar n}}{p_n}\frac{p_n}{2 p_{\bar n}}\cdot \frac{1}{3\nu}=\frac{\pi}{3\nu}.
\end{equation}
Combining (\ref{u33})-(\ref{u35}) we get contradiction. Hence we have
\begin{equation}\label{u36}
(\ref{u27},(\ref{u28} \Rightarrow (\ref{u29}.
\end{equation}
For $1\le i\le p_n$ and $1\le k\le p_{\bar n}$ we consider the squares
\begin{equation}\label{u37}
\left\{\delta_{p_n}^{i,j}:\,\frac{(k-1)p_n}{2p_{\bar n}}<j\le\frac{kp_n}{2p_{\bar n}}\right\}.
\end{equation}
Since $2p_{\bar n}$ divides $p_n$, either they are all inside of $E_{\bar n}$ or all are outside of $E_{\bar n}$.
Using the relations (\ref{u17}) and (\ref{u36}),  one can easily conclude that the number of squares $\delta_{p_n}^{i,j}$ from the collection (\ref{u37}) included in $V_n\setminus U_n$ doesn't exceed the quantity
\begin{equation*}
\frac{2}{3\nu}\cdot \frac{p_n}{2p_{\bar n}}+3<\frac{1}{\nu}\cdot \frac{p_n}{2p_{\bar n}}.
\end{equation*}
Thus we get the number of all squares $\delta \in Q_{p_n}$ with $\delta\subset V_n\setminus U_n$ is estimated above by the value $(p_n)^2/\nu$, where $(p_n)^2$ is the number of all squares $\delta \in Q_{p_n}$. Hence we get
\begin{equation*}
|V_n\setminus U_n|<\frac{(p_n)^2}{\nu}\cdot \frac{4\pi^2}{(p_n)^2}=\frac{1}{\nu}.
\end{equation*}
If $(x,y)$ belong to the left side of (\ref{u25}), then according to the definitions of $U_n$ ((\ref{u20})) and $E_n$ ((\ref{u7}), (\ref{u8}))), there exists a unique $\delta\in Q_{p_{n}}$  such that $(x,y)\in \delta\subset U_n$ and $\delta\cap E_n=\varnothing$. From (\ref{u20}) we have $\delta\subset E_{\bar n}$, then using (\ref{u7} and (\ref{u8}, we conclude
\begin{align}
&\sup_{(u,v)\in \delta}\left| \sum_{k=1}^{\bar n} a_k(u,v)\right|\le  \gamma,\label{u38}\\
&\sup_{(u,v)\in \delta}\left|\sum_{k=1}^{\bar n} a_k(u,v)+\frac{e^{i(p_nu+q_n v)}}{\sqrt{m}}\right|>\gamma.\label{u39}
\end{align}
By (\ref{u39}) there exists a point $(x_0,y_0)\in \delta$ satisfying
\begin{equation}\label{u40}
\left|\sum_{k=1}^{\bar n} a_k(x_0,y_0)+\frac{e^{i(p_nx_0+q_n y_0)}}{\sqrt{m}}\right|>\gamma.
\end{equation}
On the other hand for an arbitrary $(x,y),(x',y')\in \delta$ we have
\begin{align*}
\left|e^{p_{\bar  n}x+q_{\bar  n}y}-e^{p_{\bar  n}x'+q_{\bar  n}y'}\right|&\\
\le &\left|e^{p_{\bar  n}x+q_{\bar  n}y}-e^{p_{\bar  n}x+q_{\bar  n}y'}\right|+\left|e^{p_{\bar  n}x+q_{\bar  n}y'}-e^{p_{\bar  n}x'+q_{\bar  n}y'}\right|\\
= &\left|e^{q_{\bar  n}y}-e^{q_{\bar  n}y'}\right|+\left|e^{p_{\bar  n}x}-e^{p_{\bar  n}x'}\right|\\
\le &\sqrt 2(q_{\bar  n}|y-y'|+p_{\bar  n}|x-x'|)\\
\le &\frac{4\pi(p_{\bar  n}+q_{\bar  n})}{p_{ n}}<\frac{1}{\nu}
\end{align*}
and therefore
\begin{equation*}
r=\sup_{(x,y),(x',y')\in \delta}\left| \sum_{k=1}^{\bar n} a_k(x,y)-\sum_{k=1}^{\bar n} a_k(x',y')\right|\le 1.
\end{equation*}
Thus, using (\ref{u40}), we obtain
\begin{align*}
\left|\sum_{k=1}^{\bar n} a_k(x,y)\right|&\ge \left| \sum_{k=1}^{\bar n} a_k(x_0,y_0)\right|-r\\
&\ge \left| \sum_{k=1}^{\bar n} a_k(x_0,y_0)+\frac{e^{p_nx_0+q_ny_0}}{\sqrt m}\right|-\frac{|e^{p_nx_0+q_ny_0|}}{\sqrt m}-1\\
&> \gamma-2,
\end{align*}
which gives (\ref{u25}). According to (\ref{u24}) we have
\begin{equation*}
E_n\setminus (E_{2n-1}\cup E_{2n})\subset (\overline{V_{2n-1}}\setminus E_{2n-1})\cup (\overline{V_{2n}}\setminus E_{2n})
\end{equation*}
Using this we get
\begin{align}
\sum_{n=2^{m-1}+1}^{2^m}E_n&=\ZT^2\setminus \bigcup_{n=2}^{2^{m-1}}(E_n\setminus (E_{2n-1}\cup E_{2n}))\label{u41}\\
&\supset \ZT^2\setminus \bigcup_{n=3}^{\nu}   (\overline{V_{n}}\setminus E_{n})\nonumber\\
&= \ZT^2\setminus \left(\bigcup_{n=3}^{\nu}   (\overline{V_{n}}\setminus U_{n})\cup (U_{n}\setminus E_n)\right).\nonumber
\end{align}
From (\ref{u23}) we get
\begin{equation*}
\left|\bigcup_{n=3}^{\nu}\overline{V_n}\setminus U_n\right|=\left|\bigcup_{n=3}^{\nu}V_n\setminus U_n\right|\le  1.
\end{equation*}
Applying Lemma \ref{L6}, from (\ref{u25}) it follows that
\begin{align}
&\left|\bigcup_{n=3}^{\nu}(U_n\setminus E_n) \right|\label{u42}\\
&\qquad\le \left| \left\{(x,y)\in \ZT^2:\, \max_{1\le n\le \nu}\left|\sum_{k=1}^{n} a_k(x,y)\right|>\gamma-2\right\}\right|\nonumber\\
&\qquad\le \frac{c}{\gamma-2}\le \frac{c+2}{\gamma}<1.\nonumber
\end{align}
Combing (\ref{u41})-(\ref{u42}) we obtain
\begin{equation*}
\left|\sum_{n=2^{m-1}+1}^{2^m}E_n\right|\ge |\ZT^2|-2\ge 10
\end{equation*}
and so (\ref{u19}). Using the properties of the sets $E_n$ ((\ref{u9})-(\ref{u11})), from (\ref{u19}) we conclude
\begin{equation*}
\sum_{n=2^{k-1}+1}^{2^k}|E_n|\ge \sum_{n=2^{m-1}+1}^{2^m}|E_n|>10
\end{equation*}
for any $k=1,2,\ldots ,m$ and therefore
\begin{equation}\label{u43}
\sum_{n=2}^\nu |E_n|\ge 10m.
\end{equation}
On the other hand, since $E_n$ is a union of squares $\delta \in Q_{p_n}$, we have
\begin{align}\label{u44}
 \int_{E_n}|\cos(p_nx+q_ny)|&dxdy\\
 &=\sum\limits_{\delta \in Q_{p_n},\, \delta\subset E_n}\int_{\delta}|\cos(p_nx+q_ny)|dxdy\nonumber\\
 &=\sum\limits_{\delta \in Q_{p_n},\, \delta\subset E_n}\frac{1}{p_nq_n}\int_{\ZT^2}|\cos(x+y)|dxdy\nonumber\\
 &=4\pi\sum\limits_{\delta \in Q_{p_n},\, \delta\subset E_n}\frac{1}{p_nq_n}\nonumber\\
 &=\frac{1}{\pi} \sum\limits_{\delta \in Q_{p_n},\, \delta\subset E_n}|\delta|=\frac{|E_n|}{\pi} .\nonumber
\end{align}
Combining (\ref{u43}) and (\ref{u44}) we obtain
\begin{align*}
\sum_{n=2}^{\nu } \int_{\ZT^2}  |\Re(a_n(x,y))|dxdy&=\frac{1}{\sqrt m}\sum_{n=2}^{\nu }  \int_{E_n}|\cos(p_nx+q_ny)|dxdy\\
&\ge \frac{1}{\pi \sqrt m}\sum_{n=2}^{\nu } |E_n|\ge 2\sqrt m.
\end{align*}
\end{proof}
\begin{lemma}\label{L7}
Let $\sigma$ be the rearrangement of the integers $\{2,3,\cdots ,2^m\}$ determined by Lemma \ref{L1}. If $\gamma $ and $p_n$ satisfy the hypothesis of Lemma \ref{L5}, then
\begin{equation}\label{u45}
\left|\left\{(x,y)\in {\ZT^2}:\max_{2\le n\le \nu }\left|\sum_{j=1}^n a_{\sigma (j)}(x,y)\right|>\frac{\sqrt{m }}{120}\right\}\right|>1.
\end{equation}
\end{lemma}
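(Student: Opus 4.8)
The plan is to deduce \eqref{u45} from Lemma~\ref{L5} (the lower bound $\sum_n\int|\Re a_n|>2\sqrt m$), Lemma~\ref{L1} (the rearrangement $\sigma$), and the uniform pointwise bound in \eqref{u12}, by passing to the \emph{real parts} of the $a_n$. The first step is to check that the functions $\Re a_n(x,y)=\ZI_{E_n}(x,y)\cos(p_nx+q_ny)/\sqrt m$, $n=2,\dots,\nu$, form a tree-system. This is exactly what condition \eqref{u7} says: the support $E_n$ of $\Re a_n$ lies in $E_{\bar n}\cap\{(x,y):\,(-1)^{j+1}\cos(p_{\bar n}x+q_{\bar n}y)>0\}$, and on $E_{\bar n}$ the function $\Re a_{\bar n}$ coincides with $\cos(p_{\bar n}x+q_{\bar n}y)/\sqrt m$, so $\supp\Re a_n\subset\{(x,y):\,(-1)^{j+1}\Re a_{\bar n}(x,y)>0\}$, the defining property of a tree-system. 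The indices $n$ with $E_n=\varnothing$ give identically zero functions; these require the usual minor adjustment (discard them, or complete the family to a genuine tree-system respecting the support conditions, which is possible since $E_{\bar n}\neq\varnothing$ whenever $E_n\neq\varnothing$) and do not affect any of the estimates below.

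Next I would apply Lemma~\ref{L1} to this tree-system and use that taking real parts commutes with finite sums, obtaining for every $(x,y)\in\ZT^2$
\begin{multline*}
\max_{2\le l\le\nu}\Bigl|\sum_{j=2}^{l}a_{\sigma(j)}(x,y)\Bigr|
\ge\max_{2\le l\le\nu}\Bigl|\sum_{j=2}^{l}\Re a_{\sigma(j)}(x,y)\Bigr|\\
\ge\frac13\sum_{n=2}^{\nu}\bigl|\Re a_n(x,y)\bigr|.
\end{multline*}
Writing $g(x,y)=\sum_{n=2}^{\nu}|\Re a_n(x,y)|$, and noting that $\tfrac13\cdot\tfrac{1}{40}=\tfrac1{120}$, this pointwise inequality yields the inclusion
\begin{equation*}
\Bigl\{(x,y):\ \max_{2\le l\le\nu}\Bigl|\sum_{j=2}^{l}a_{\sigma(j)}(x,y)\Bigr|>\tfrac{\sqrt m}{120}\Bigr\}\ \supset\ \Bigl\{(x,y):\ g(x,y)>\tfrac{\sqrt m}{40}\Bigr\},
\end{equation*}
so it remains to prove $|\{g>\sqrt m/40\}|>1$.

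For that last step I would combine two facts. On one hand, \eqref{u12} gives the uniform bound $g(x,y)\le\sum_{n=2}^{\nu}|a_n(x,y)|\le\sqrt m$ for all $(x,y)$. On the other hand, Lemma~\ref{L5} together with Tonelli's theorem gives $\int_{\ZT^2}g=\sum_{n=2}^{\nu}\int_{\ZT^2}|\Re a_n|>2\sqrt m$. Splitting the integral according to whether $g\le\sqrt m/40$ or not, and using $|\ZT^2|=4\pi^2$,
\begin{equation*}
2\sqrt m<\int_{\ZT^2}g\le\frac{\sqrt m}{40}\cdot 4\pi^2+\sqrt m\cdot\bigl|\{g>\sqrt m/40\}\bigr|,
\end{equation*}
whence $|\{g>\sqrt m/40\}|>2-\pi^2/10>1$, which is \eqref{u45}.

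The conceptual content is short, so the main thing to watch is that the numerology barely closes: the final threshold $2-\pi^2/10\approx1.013$ exceeds $1$ with almost no margin, so nothing may be wasted along the chain Lemma~\ref{L5}~$\to$~Lemma~\ref{L1}~$\to$~Chebyshev-type splitting, and one must in particular keep the uniform bound $g\le\sqrt m$ sharp via \eqref{u12}. The only other point that needs to be written out rather than skipped is the treatment of the empty sets $E_n$ in verifying the tree-system hypothesis of Lemma~\ref{L1}; this is routine given \eqref{u9}--\eqref{u11}.
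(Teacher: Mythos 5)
Your proof is correct and follows essentially the same route as the paper: verify that $\Re a_n$ is a tree-system, apply Lemma~\ref{L1} to reduce to a lower bound on the measure of $\{\sum|\Re a_n|>\text{const}\cdot\sqrt m\}$, and then deduce that bound from $\|\cdot\|_1>2\sqrt m$ (Lemma~\ref{L5}) and $\|\cdot\|_\infty\le\sqrt m$ (from \eqref{u12}) by a Chebyshev-type splitting. The paper packages that last step as Lemma~\ref{L} and uses the threshold $\sqrt m/(4\pi^2)$ rather than your $\sqrt m/40$, but the two choices lead to the same conclusion with comparably thin margins ($4\pi^2/(4\pi^2-1)\approx1.026$ versus $2-\pi^2/10\approx1.013$). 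One small point where you are in fact \emph{more} careful than the paper: you explicitly flag the possible $E_n=\varnothing$ cases against the nondegeneracy requirement $f_n\not\equiv0$ in the tree-system definition, which the paper silently ignores; your suggested fix (perturb or extend the convention, noting that zero functions propagate down the tree from \eqref{u7}) is the standard one.
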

\begin{proof}
The function system
\begin{equation*}
u_n(x,y)=\frac{\ZI_{E_n}(x,y)}{\sqrt m}\cos(p_nx+q_ny)=\Re (a_n(x,y)),
\end{equation*}
where $n=2,3,\cdots ,\nu =2^m$, is a tree-system, since by definition ((\ref{u7}), (\ref{u8})) we have
\begin{align*}
\supp  (u_n(x,y))\subset E_n &\subset \{(x,y)\in E_{\bar n} :\, (-1)^{j+1}
    \cos (p_{\bar n}x+q_{\bar n}y)>0\big\}\\
    &=\{(x,y)\in E_{\bar n} :\,
    (-1)^{j+1}u_{\bar n}(x,y)>0\big\} \\
    &=\{(x,y)\in {\ZT^2}:\,(-1)^{j+1}u_{\bar n}(x,y)>0\big\},
\end{align*}
Hence, applying Lemma \ref{L1}, we get
\begin{multline}\label{u47}
\sup_{2\le l\le \nu}\left|\sum_{n=2}^{l} a_{\sigma (n)}(x,y)\right|\\
\ge\sup_{2\le l\le \nu}\left|\sum_{n=2}^{l}u_{\sigma (n)}(x,y)\right|
\ge\frac{1}{3}\sum_{n=2}^{\nu}\big|u_{n}(x,y)\big|.
\end{multline}
Consider  the function
\begin{equation}\label{u48}
 f(x,y)= \sum_{n=2}^{\nu}|u_{n}(x,y)|.
\end{equation}
By Lemma \ref{L6} we have $\|f\|_1>2\sqrt m$. On the other hand $\|f\|_\infty \le \sqrt m$, because
\begin{equation*}
0\le  f(x,y)\le \sum_{n=2}^{\nu}\frac{\ZI_{E_n}(x,y)}{\sqrt m}\le \sqrt m.
\end{equation*}
Thus, applying Lemma \ref{L}, we get
\begin{align*}
\left|\left\{(x,y)\in {\ZT^2}:\,|f(x,y)|>\frac{\sqrt m}{4\pi^2}\right\}\right|&>\frac{4\pi^2}{8\pi^2(\|f\|_\infty/\|f\|_1)-1}\\
&\ge \frac{4\pi^2}{4\pi^2-1}>1.
\end{align*}
Combining this with (\ref{u47}) and (\ref{u48}), we obtain (\ref{u45}).
\end{proof}
\begin{lemma}\label{L8}
If $\gamma>1$ and $p_n$ divides $p_{n+1}$, then
\begin{equation*}
\left|\sum_{k=2}^\nu a_k(x,y)\right|\le \gamma,\quad (x,y)\in \ZT^2.
\end{equation*}
\end{lemma}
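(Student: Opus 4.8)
The plan is to fix an arbitrary point $(x,y)\in\ZT^2$ and reduce $\sum_{k=2}^{\nu}a_k(x,y)$ to a single quantity that the defining inequality (\ref{u8}) already bounds by $\gamma$. First I would note that $a_k(x,y)=\ZI_{E_k}(x,y)e^{i(p_kx+q_ky)}/\sqrt m$ vanishes unless $(x,y)\in E_k$, so only indices $k$ with $(x,y)\in E_k$ contribute. By the structural relations (\ref{u9})--(\ref{u11}) — within-block disjointness together with $E_{2n-1}\cup E_{2n}\subset E_n$ — membership in some $E_k$ forces membership in every ancestor $E_{\bar k},E_{\bar{\bar k}},\dots$, while at each dyadic block $\{2^j<k\le 2^{j+1}\}$ at most one $E_k$ contains $(x,y)$. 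Hence $\{k:(x,y)\in E_k\}$ is a root-to-node chain $n_0=2<n_1<\dots<n_s$ with $\overline{n_{i+1}}=n_i$ (strict monotonicity since each $n_i\ge 2$ and $n_{i+1}\in\{2n_i-1,2n_i\}$), possibly empty. If it is empty then $\sum_{k=2}^{\nu}a_k(x,y)=0$; if $s=0$ then $\bigl|\sum_{k=2}^{\nu}a_k(x,y)\bigr|=|a_2(x,y)|\le 1/\sqrt m<\gamma$; and in general $\sum_{k=2}^{\nu}a_k(x,y)=\sum_{i=0}^{s}a_{n_i}(x,y)$, so from now on I assume $s\ge 1$.

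The main step is this: since $(x,y)\in E_{n_s}$ and $E_{n_s}$ is the union of those rectangles $\delta\in Q_{p_{n_s}}$ obeying (\ref{u7})--(\ref{u8}), fix the one with $(x,y)\in\delta\subset E_{n_s}$. Condition (\ref{u8}) for $n=n_s$, where $\bar n=n_{s-1}$, reads
\begin{equation*}
\sup_{(u,v)\in\delta}\left|\sum_{k=1}^{n_{s-1}}a_k(u,v)+\frac{e^{i(p_{n_s}u+q_{n_s}v)}}{\sqrt m}\right|\le\gamma .
\end{equation*}
The claim to verify is that on all of $\delta$ this bracketed expression equals $\sum_{i=0}^{s}a_{n_i}(u,v)$; granting it and putting $(u,v)=(x,y)$ gives $\bigl|\sum_{i=0}^{s}a_{n_i}(x,y)\bigr|\le\gamma$, which is the lemma. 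For the claim: $\delta\subset E_{n_s}\subset E_{n_{s-1}}\subset\dots\subset E_{n_0}$ by (\ref{u11}), so every $(u,v)\in\delta$ lies in each $E_{n_i}$; in particular $\ZI_{E_{n_s}}(u,v)=1$, so the last term equals $a_{n_s}(u,v)$. For the truncated sum, I run the chain argument with $(u,v)$ in place of $(x,y)$: the indices $l$ with $(u,v)\in E_l$ form an increasing chain containing $n_0,\dots,n_s$, and since $n_{s-1}\le 2^s$ while every node at dyadic level $\ge s$ has index $>2^s$, any such $l\le n_{s-1}$ sits at level $<s$ and hence equals one of $n_0,\dots,n_{s-1}$. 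Therefore $\sum_{k=1}^{n_{s-1}}a_k(u,v)=\sum_{i=0}^{s-1}a_{n_i}(u,v)$, which proves the claim.

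The only delicate point is this last identification of the truncated sum $\sum_{k=1}^{n_{s-1}}a_k$ with the chain sum $\sum_{i=0}^{s-1}a_{n_i}$ over the whole rectangle $\delta$: it rests on the purely combinatorial fact that a node at dyadic level $\ell$ has index in $\{2^{\ell}+1,\dots,2^{\ell+1}\}$, so no off-chain index $\le n_{s-1}$ can re-enter $E_{\bullet}$ at a deeper level where its $a$-term would be dropped. Everything else is immediate from the definitions, and the standing hypotheses play only a supporting role: the divisibility $p_n\mid p_{n+1}$ makes the grids $Q_{p_n}$ nested, which is what renders (\ref{u7})--(\ref{u8}) and hence the sets $E_n$ well defined, and $\gamma>1$ enters only through the trivial estimate $1/\sqrt m<\gamma$ in the degenerate cases $s=0$ and the empty chain.
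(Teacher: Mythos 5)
Your proposal is correct and follows essentially the same route as the paper: identify the deepest index $n$ (your $n_s$) with $(x,y)\in E_n\setminus(E_{2n-1}\cup E_{2n})$, observe that only the chain ancestors of $n$ contribute to $\sum_{k=2}^\nu a_k(x,y)$, and read off the bound from the defining condition (\ref{u8}) applied to the rectangle $\delta\subset E_n$ containing $(x,y)$. The paper states this in one sentence ("from the definition of the sets $E_n$ ((\ref{u7}), (\ref{u8})) we get $|\sum_{k=2}^{n} a_k(x,y)|\le\gamma$, and $a_k(x,y)=0$ while $k>n$"), whereas you unfold the combinatorics of the dyadic chain that justify both that sentence and the degenerate cases — a fuller but not materially different argument.
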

\begin{proof}
Take an arbitrary $(x,y)\in \ZT^2$. We have
\begin{equation}\label{y61}
(x,y)\in E_n\setminus (E_{2n-1}\cup E_{2n})
\end{equation}
for an integer $2\le n\le 2^{m}$. Then from the definition of the sets $E_n$ ((\ref{u7}), (\ref{u8})) we get
\begin{equation*}
\left|\sum_{k=2}^{ n} a_k(x,y)\right|\le \gamma,
\end{equation*}
and $ a_k(x,y)=0$ while $k>n$. This implies
\begin{equation*}
\left|\sum_{k=2}^\nu a_k(x,y)\right|=\left|\sum_{k=2}^{ n} a_k(x,y)\right|\le \gamma,
\end{equation*}
and so the lemma is proved.
\end{proof}

Any finite sum of the form
\begin{equation*}
T(x,y)=\sum_{(n,m)\in G}c_{nm}e^{i(nx+my)},
\end{equation*}
where $G\subset \ZR^2$ is a bounded region, is said to be a double trigonometric polynomial. The spectrum of this polynomial is denoted by
\begin{equation*}
\spec (T)=\left\{(n,m)\in \ZZ^2:\, c_{nm}\neq0\right\}.
\end{equation*}
We will consider the sectorial regions
\begin{align}\label{u49}
V(\alpha,\beta)=\{(x,y)\in \ZR^2:\, x=r\cos \theta , &y=r\sin\theta,\\
&r\ge 0,\, \alpha\le
\theta < \beta \},\nonumber
\end{align}
where $0\leq \alpha<\beta\le 2\pi$. In the proof of the following basic lemma the technique of the paper \cite {Kar1} is used.
\begin{lemma}\label{L4}
If $S_n$, $n=2,3,\cdots ,\nu =2^m$, $m\ge 10$, is an arbitrary sequence of sectors of the form (\ref{u49}), then there exists a sequence of polynomials $T_n(x,y)$, $n=2,3,\cdots ,\nu$, such that
\begin{align}
& \spec T_n\subset S_n,\quad n=2,3,\cdots ,\nu,\label{u50}\\
&\left\|\sum_{n=1}^\nu T_n\right\|_\infty\le c_1,\label{u51}\\
&\left|\left\{(x,y)\in {\ZT^2}:\max_{2\le n\le \nu }\left|\sum_{j=1}^nT_j(x,y)\right|>c_2\sqrt m\right\}\right|>1,\label{u52}
\end{align}
where $c_1,c_2>0$ are some absolute constants.
\end{lemma}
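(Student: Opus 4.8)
The plan is to realize each $T_n$ as a trigonometric polynomial obtained by truncating a modulated ``tree block'' of the form \eqref{u6}, the blocks being built---following the method of \cite{Kar1}---from lattices that are tilted and stretched so that the resulting spectra fall inside the prescribed sectors. Concretely, I would run the construction of Lemmas \ref{L5}--\ref{L8} on the binary tree $\{2,\dots,\nu\}$, but assign to each node $k$ the sector $S_{\sigma^{-1}(k)}$, where $\sigma$ is the rearrangement of Lemma \ref{L1} (this reindexing is what will turn the $\sigma$-ordered divergence of the blocks into the naturally-ordered conclusion \eqref{u52}); fix inside $S_{\sigma^{-1}(k)}$ a ray $\ell_k$ through a primitive integer vector, at angular distance $\ge\tfrac13(\beta_{\sigma^{-1}(k)}-\alpha_{\sigma^{-1}(k)})$ from its bounding rays (rational directions being dense); take $(p_k,q_k)$ to be a large integer multiple of that vector; and replace the square cells $Q_{p_k}$ by a fine lattice of long thin parallelograms elongated transverse to the parent direction $\ell_{\bar k}$. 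The sets $E_k$ and the relations \eqref{u7}--\eqref{u11} still make sense, and the proofs of Lemmas \ref{L5}--\ref{L8} should go through essentially as before: Lemma \ref{L8} still gives $\|\sum_k a_k\|_\infty\le\gamma$ with $\gamma$ absolute, $\Re a_k$ is still a tree-system, and---since each thin parallelogram, though narrow in the parent direction, still spans many periods of the oscillation $\cos(p_kx+q_ky)$ in every other direction---the count \eqref{u44} and hence Lemma \ref{L5}'s bound \eqref{u18} persist up to harmless constants. So, as in the proof of Lemma \ref{L7}, Lemma \ref{L1} together with \eqref{u18}, \eqref{u12} and Lemma \ref{L} applied to $\sum_k|\Re a_k|$ yield $\sup_l\big|\sum_{j\le l}\Re a_{\sigma(j)}\big|>c\sqrt m$ on a set of measure $>1$.

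The key new point is the location of the spectra. Because each cell of $E_k$ is narrow (of size $\sim|(p_{\bar k},q_{\bar k})|^{-1}$) in the parent direction $\ell_{\bar k}$ and long transverse to it, the spectrum of $\ZI_{E_k}$ is concentrated in a slab of dimensions $\sim|(p_{\bar k},q_{\bar k})|\times L_k^{-1}$ about the line $\ZR\ell_{\bar k}$; after modulation by $e^{i(p_kx+q_ky)}$ this slab is translated to a region sitting within distance $\ll|(p_k,q_k)|$ of the point $(p_k,q_k)$---hence well away from the origin---and of angular width $\sim|(p_{\bar k},q_{\bar k})|/|(p_k,q_k)|$ about $\ell_k$. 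Imposing on the frequencies, besides $p_k\mid p_{k+1}$, $2p_{\bar k}\mid p_k$ and the growth condition \eqref{u17}, the stronger demand that $|(p_k,q_k)|$ exceed $|(p_{\bar k},q_{\bar k})|$ and $(L_k\cdot\text{width of }S_{\sigma^{-1}(k)})^{-1}$ by a large factor depending on that width---still only a lower bound, hence realizable by a sufficiently rapid choice of the $p_k$---the entire slab lies inside $S_{\sigma^{-1}(k)}$. Here the thin parallelograms are essential: a literal square cell would smear the spectrum of the modulated block over a disc of radius $\sim|p_k|$ about the huge frequency, which cannot be squeezed into a narrow sector.

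Since $\ZI_{E_k}$ has an infinite Gibbs tail, the blocks are not polynomials; so I would finally set $T_n$ equal to the block of node $\sigma(n)$ cut off by a non-negative Fej\'er-type multiplier to the frequencies lying in $S_n$ (applied in the radial and transverse frequency variables). Such a multiplier has norm $\le1$ on $L^\infty$ and on $L^1$; it makes $\spec T_n\subset S_n$, which is \eqref{u50}; it leaves \eqref{u51} in force, via Lemma \ref{L8} together with its norm bound; and it perturbs $\sum_k|\Re a_k|$ by $o(1)$ in $L^1$ (by the rapid growth of the $|p_k|$) while keeping its $L^\infty$-norm below $\sqrt m+1$. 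Therefore the partial sums $\sum_{j\le n}T_j$ are precisely the $\sigma$-ordered partial sums of the cut-off blocks, so Lemma \ref{L}---still applicable after the $o(1)$ perturbation---gives $\big|\{\sup_n|\sum_{j\le n}T_j|>c_2\sqrt m\}\big|>1$, which is \eqref{u52}.

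The main obstacle is everything concerning the spectra. One must place $\spec T_n$ inside the given---possibly very narrow---sectors without destroying the uniform bound \eqref{u51} (which rules out sharp Dirichlet truncation and forces a Fej\'er average) and without sacrificing so much of the $L^1$-mass of $\sum_k|\Re a_k|$ that Lemma \ref{L} no longer produces a set of measure $>1$, and one must do so compatibly with the nested supports $E_{2k-1}\cup E_{2k}\subset E_k$ on which the tree-system structure rests---so the cell geometry at a node is pinned down both by its own direction and by its parent's. In effect one has to rerun the proofs of Lemmas \ref{L5}--\ref{L8} in the tilted, stretched setting and check that every quantitative step there is stable under the change; that verification, via the technique of \cite{Kar1}, is the substance of the proof.
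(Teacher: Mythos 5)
Your proposal takes a genuinely different, and substantially more complicated, route from the paper, and it has a real gap in the transfer of the $L^\infty$ bound from the $a_n$ to the $T_n$.

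The paper never changes the cell geometry and never cuts off a spectrum. Instead it builds \emph{polynomials} $f_n$, constructed directly to satisfy $0\le f_n\le 1/\sqrt m$ and the pointwise approximation $|f_n-\ZI_{E_n}/\sqrt m|<1/\nu$ on $E_n\cup(E_{\bar n})^c$ (conditions (\ref{u56})--(\ref{u57})), and \emph{only afterwards} fixes $(p_n,q_n)$. Since $\spec\bigl(f_ne^{i(p_nx+q_ny)}\bigr)=\spec(f_n)+(p_n,q_n)$ and $\spec(f_n)$ is a fixed finite set, choosing $(p_n,q_n)$ a sufficiently large lattice point deep inside $S_{\sigma^{-1}(n)}$ places the whole translated spectrum inside the sector exactly, with no trimming whatsoever. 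The order of quantifiers---pick the approximating polynomial first, the modulating frequency afterwards---is what makes the spectral placement trivial, and this is precisely the idea your plan misses: you keep the indicator function (infinite spectrum), re-engineer the cell geometry into tilted thin parallelograms so that the spectrum is slab-shaped, and then force it into the sector by a Fej\'er multiplier.

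The linchpin of the paper's transfer step is the \emph{pointwise} inequality (\ref{u58}), $\sum_{j}|T_j(x,y)-a_{\sigma(j)}(x,y)|\le 2$ for every $(x,y)$. It holds because at any fixed point only the two indices $j=2n,2n-1$ with $(x,y)\in E_n\setminus(E_{2n-1}\cup E_{2n})$ lie in the transition zone where (\ref{u57}) is not active; all other indices contribute $\le 1/\nu$, and the two exceptional ones $\le 1/\sqrt m$. This pointwise control is what delivers both (\ref{u51}) via Lemma~\ref{L8} and (\ref{u52}) via Lemma~\ref{L7}. Your Fej\'er cut-off cannot reproduce it: a band-limited multiplier spreads the support of each block over all of $\ZT^2$, so $|T_n-a_{\sigma(n)}|$ is of order $\|a_n\|_\infty=1/\sqrt m$ near $\partial E_{\sigma(n)}$ and is nonzero everywhere else, making the naive pointwise sum $O(\nu/\sqrt m)=O(2^m/\sqrt m)$. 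An $L^1$ estimate on the total perturbation suffices for (\ref{u52}) (discard a small exceptional set), but not for the uniform bound (\ref{u51}); your appeal to Lemma~\ref{L8} plus "its norm bound" does not close this. Finally, you would have to re-prove Lemmas~\ref{L5}--\ref{L8} with long thin parallelograms nested across changing tilt directions; the square cell geometry is used explicitly in the martingale step of Lemma~\ref{L6} and in the cell count behind (\ref{u23}), and you do not verify that the nesting $E_{2n-1}\cup E_{2n}\subset E_n$ stays geometrically realizable when the elongation direction rotates with $n$. The paper's route avoids every one of these issues.
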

\begin{proof} Let $\sigma$ be the rearrangement of the numbers $\{2,3,\cdots ,\nu =2^m\}$ determined by Lemma \ref{L1} and let
\begin{equation*}
\gamma=c+2,
\end{equation*}
where $c$ is the constant from (\ref{u13}). We define positive integers $p_n,q_n$  and double trigonometric polynomials $f_n(x,y)$ such that they, together with the  sets $E_n\subset \ZT^2$, $n=2,3,\cdots \nu$, defined by (\ref{u7}) and (\ref{u8}) satisfy the relations
\begin{align}
&2p_{\bar n}| p_n,\label{u53}\\
& |p_{n}|>20m(|p_{\bar n}|+|q_{\bar n}|),\label{u54}\\
&\spec \left(f_n(x,y)e^{i(p_nx+q_ny)}\right) \subset S_{\sigma^{-1}(n)},\, 2\le n\le\nu ,\label{u55}\\
&0\le f_n(x,y)\le \frac{1}{\sqrt {m}}, \quad (x,y)\in \ZT^2,\label{u56}\\
&\left|f_n(x,y)-\frac{\ZI_{E_n}(x,y)}{\sqrt {m}}\right|< \frac{1}{\nu} ,\quad (x,y)\in E_n\cup (E_{\bar{n}})^c, \, 2\le n\le\nu.\label{u57}\
\end{align}
We will use the induction. As a first step of induction we suppose $E_2={\ZT^2}$, $f_2(x,y)\equiv 1/\sqrt m$ and fix integers $p_2$ and $q_2$ with $(p_2,q_2)\in S_{\sigma^{-1}(2)} $. Obviously we will have the relations (\ref{u53})-(\ref{u57}) for $n=2$.
Now we suppose that the conditions (\ref{u53})-(\ref{u57}) are satisfied for any $n<l$ and in particular for $\bar l$. Since $\overline  E_l \subset E_{\bar l}$ and $E_{\bar l}$ is an open set, we may find a polynomial $f_l(x,y)$ satisfying (\ref{u56}) and (\ref{u57}) (with $n=l$). Since
\begin{equation*}
\spec \left(f_n(x,y)e^{i(p_nx+q_ny)}\right)=\spec \left(f_n\right)+(p_n,q_n),
\end{equation*}
we can choose integers $p_n$ and $q_n$ satisfying (\ref{u53})-(\ref{u55}) ($n=l$). This completes the induction.
Now we define our desired polynomials as follows:
\begin{equation*}
T_n(x,y)=f_{\sigma(n)}(x,y)e^{i(p_{\sigma(n)}x+q_{\sigma(n)}y)},\,2\le n\le \nu.
\end{equation*}
Together with $T_n(x,y)$ we will consider also the function system $a_n(x,y)$ defined in (\ref{u6}). From (\ref{u55}) we have
\begin{equation*}
\spec(T_n)=\spec \left(f_{\sigma(n)}(x,y)e^{i(p_{\sigma(n)}x+q_{\sigma(n)}y)}\right)\subset S_n,
\end{equation*}
which implies (\ref{u50}).
If $(x,y)\in \ZT^2$, then we have
\begin{equation*}
(x,y)\in E_n\setminus (E_{2n-1}\cup E_{2n})
\end{equation*}
for some integer $2\le n\le 2^{m}$.
From this it follows that $(x,y)\in E_j\cup (E_{\bar{j}})^c$ whenever $2\le j\le \nu$ and $j\neq 2n,2n-1$. Thus we get
\begin{align*}
\left|f_j(x,y)-\frac{\ZI_{E_j}(x,y)}{\sqrt m}\right|< \frac{1}{\nu},\quad 2\le j\le \nu,\quad j\neq 2n,2n-1,
\end{align*}
and therefore
\begin{align}\label{u58}
\sum_{j=1}^\nu &\left|T_j(x,y)-a_{\sigma(n)}(x,y)\right|\\
&\qquad\qquad\qquad=\sum_{j=1}^\nu \left|f_j(x,y)-\frac{\ZI_{E_j}(x,y)}{\sqrt m}\right|\nonumber\\
&\qquad\qquad\qquad\le \sum_{j\neq 2n,2n-1} \left|f_j(x,y)-\frac{\ZI_{E_j}(x,y)}{\sqrt m}\right|+1\nonumber\\
&\qquad\qquad\qquad\le 2.\nonumber
\end{align}
From this and Lemma \ref{L8} we get
\begin{align*}
\left|\sum_{n=1}^\nu T_n(x,y)\right| \le \left|\sum_{n=1}^\nu a_n(x,y)\right|+2\le \gamma+2=c+4
\end{align*}
and hence (\ref{u51}). From (\ref{u58}) it also follows that
\begin{align}\label{u59}
\max_{2\le l\le \nu}\left|\sum_{j=2}^lT_j(x,y)\right| &\ge\max_{2\le l\le\nu}\left|\sum_{j=2}^la_{\sigma (j)}(x,y)\right|\\
&\qquad\qquad-\sum_{j=2}^\nu | T_j(x,y)-a_{\sigma (j)}(x,y)|\nonumber\\
&\ge \max_{2\le l\le\nu}\left|\sum_{j=2}^la_{\sigma (j)}(x,y)\right|-2.\nonumber
\end{align}
Combining (\ref{u59}) with Lemma \ref{L7}, we obtain
\begin{align*}
&\left|\left\{(x,y)\in {\ZT^2}:\max_{2\le l\le \nu }\left|\sum_{j=2}^lT_n(x,y)\right|>\frac{\sqrt{m }}{240}\right\}\right|\\
&>\left|\left\{(x,y)\in {\ZT^2}:\max_{2\le l\le \nu }\left|\sum_{j=2}^la_{\sigma (n)}(x,y)\right|>\frac{\sqrt{m }}{120}\right\}\right|>1.
\end{align*}
and therefore we will have (\ref{u52}).
\end{proof}
\begin{lemma}\label{L11}
 For any $\delta>0$ and $m, s\in \ZN$ there exist a sequence of regions $\Delta_n$ of the form (\ref{v4}) and polynomials $Q_n$, $n=2,3,\cdots ,\nu=2^m$, such that
\begin{align}
&\rho(\Delta_n)<1+\delta,\quad n=2,3,\cdots ,\nu,\label{u60}\\
&\Delta_1=\Delta(s,s), \quad  \Delta_{n}\subset \Delta_{n+1},\quad n=1,2,,\cdots ,\nu-1,\label{u61}\\
& \spec Q_n\subset (\Delta_n\setminus\Delta_{n-1})\cap \ZR_+^2,\quad n=2,3,,\cdots ,\nu,\label{u62}\\
&\left\|\sum_{j=2}^\nu Q_j\right\|_\infty\le c_1,\label{u63}\\
&\left|\left\{(x,y)\in {\ZT^2}:\max_{2\le n\le \nu }\left|\sum_{j=2}^nQ_j(x,y)\right|>c_2\sqrt{m }\right\}\right|>1.\label{u64}
\end{align}
\end{lemma}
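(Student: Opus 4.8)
The plan is to obtain Lemma \ref{L11} from Lemma \ref{L4} by a geometric substitution: replace the abstract sectors $S_n$ by genuine "rhombus annuli" $(\Delta_n\setminus\Delta_{n-1})\cap\ZR_+^2$. First I would fix a large parameter $N$ (to be chosen at the end, depending on $\delta$, $m$ and $s$) and build a nested sequence of rhombi $\Delta_1\subset\Delta_2\subset\cdots\subset\Delta_\nu$ of the form $\Delta(a_n,b_n)$ with $\rho(\Delta_n)<1+\delta$, starting from $\Delta_1=\Delta(s,s)$, whose boundary lines (in the first quadrant the segments $a_nx+b_ny=1$) sweep through a fine family of directions: the idea is that as $n$ runs, the outward normal direction of $\partial\Delta_n$ in $\ZR_+^2$ turns, and because we only need $\rho<1+\delta$ we have a small but nonzero angular window to play with. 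The key point is that each set difference $(\Delta_n\setminus\Delta_{n-1})\cap\ZR_+^2$ contains, for lattice points far from the origin, a full narrow sector $S_n$ of the form (\ref{u49}); more precisely, by scaling the whole configuration by $N$ we can guarantee that $S_n\cap\{|(n,m)|\ge \text{(threshold)}\}\subset(\Delta_n\setminus\Delta_{n-1})\cap\ZR_+^2$, since at large radius the region between two consecutive rhombus boundaries looks like a sector.

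Next I would apply Lemma \ref{L4} to this sequence of sectors $S_n$, $n=2,\dots,\nu$, producing trigonometric polynomials $T_n$ with $\spec T_n\subset S_n$, with $\|\sum T_n\|_\infty\le c_1$, and with the maximal-partial-sum lower bound (\ref{u52}). The spectra of the $T_n$ lie in $S_n\subset\ZR_+^2$, but they need not lie far from the origin, so I would modulate: set $Q_n(x,y)=e^{i(Ax+Ay)}T_n(x,y)$ for a single large integer shift $A$ chosen so that $\spec Q_n=\spec T_n+(A,A)$ is pushed past the threshold radius and hence lands inside $(\Delta_n\setminus\Delta_{n-1})\cap\ZR_+^2$, which is (\ref{u62}). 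Multiplying every $T_n$ by the same unimodular exponential changes neither $\|\sum Q_j\|_\infty=\|\sum T_j\|_\infty\le c_1$ nor the modulus $|\sum_{j=2}^nQ_j|=|\sum_{j=2}^nT_j|$, so (\ref{u63}) and (\ref{u64}) are inherited verbatim from (\ref{u51}) and (\ref{u52}) with the same constants $c_1,c_2$. The conditions (\ref{u60}) and (\ref{u61}) are built into the construction of the $\Delta_n$.

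The main obstacle — and the step that needs genuine care rather than bookkeeping — is the geometric lemma that the $\Delta_n$ can be chosen \emph{nested}, with ratios all below $1+\delta$, while still making all the requisite directions available and keeping the "annular" gaps $\Delta_n\setminus\Delta_{n-1}$ wide enough (after scaling by $N$) to contain honest sectors $S_n$. One has to quantify how much the normal direction of $\partial\Delta(a,b)$ in $\ZR_+^2$ can vary as $(a,b)$ ranges over $\{\max/\min<1+\delta\}$, and check that finitely many ($\nu-1$ of them) such directions, together with a radius threshold, can be realized simultaneously by a monotone chain of rhombi sitting inside a fixed bounded region; then choose $N$ large enough that below-threshold lattice points are irrelevant and above-threshold the containment $S_n\subset(\Delta_n\setminus\Delta_{n-1})\cap\ZR_+^2$ holds. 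Everything after that is routine.
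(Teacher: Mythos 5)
Your overall strategy --- apply Lemma \ref{L4} to sectors in a thin angular window near the direction $3\pi/4$, then modulate by a single plane wave to transport the spectra into $\ZR_+^2$, with (\ref{u63}) and (\ref{u64}) inherited from (\ref{u51}) and (\ref{u52}) because the modulus of the partial sums is unchanged --- is exactly the paper's. But the geometric step you flag as ``needing genuine care'' is not mere bookkeeping, and the particular route you sketch has a real obstruction. You propose to build the nested rhombi $\Delta_n$ first, fit sectors into the annular gaps, and finally shift by the diagonal vector $(A,A)$. Two problems. First, your ``more precise'' inclusion $S_n\cap\{|(n,m)|\ge R\}\subset(\Delta_n\setminus\Delta_{n-1})\cap\ZR_+^2$ is false as written: the right side is bounded, the left is not. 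What is actually needed is $\spec T_n+(A,A)\subset(\Delta_n\setminus\Delta_{n-1})\cap\ZR_+^2$, and for this the shift $(A,A)$ is the wrong choice. Each gap $(\Delta_n\setminus\Delta_{n-1})\cap\ZR_+^2$ is a truncated wedge whose natural vertex is the intersection point of the two hypotenuse lines $a_nx+b_ny=1$ and $a_{n-1}x+b_{n-1}y=1$, and these vertices move with $n$. If one tries to make them all coincide with $(A,A)$, so that the translated sectors $(A,A)+S_n$ (which all share the vertex $(A,A)$) slot in cleanly, one gets $a_n+b_n=1/A$ for every $n$; combined with nestedness ($a_n,b_n$ non-increasing) this forces all the $\Delta_n$ to be equal. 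Second, there is an ordering problem: Lemma \ref{L4} gives no a priori control on how far out in $S_n$ the spectrum of $T_n$ lies (the frequencies $p_n,q_n$ are chosen inductively and grow fast), so one cannot design the rhombi \emph{before} applying the lemma and then hope the spectra land in the pre-chosen gaps; scaling by $N$ at the end fixes size but not angular placement.

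The paper avoids both issues by running your argument backwards and by choosing a different modulation direction. It takes the sectors $V_n$ in the closed second quadrant $(-\infty,0]\times[0,\infty)$, abutting the ray $\theta=\pi$ and with the other edge at an angle tending to $3\pi/4$, applies Lemma \ref{L4} to $S_n=V_n\setminus V_{n-1}$, and then modulates by $e^{ilx}$ --- a shift by $(l,0)$ \emph{on the positive $x$-axis}, not along the diagonal. The set $\Theta_n=((l,0)+V_n)\cap\ZR_+^2$ is then automatically a triangle with vertices $(0,0)$, $(l,0)$, $(0,y_n)$, i.e.\ $\Delta(1/l,1/y_n)\cap\ZR_+^2$: all hypotenuses pass through $(l,0)$, so $a_n=1/l$ is held constant while $b_n=1/y_n$ decreases, which is perfectly compatible with nestedness, and $\rho(\Delta_n)\to 1$ as the free edge angle tends to $3\pi/4$. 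The rhombi $\Delta_n$ are simply the four-fold symmetrizations of the $\Theta_n$, i.e.\ they are \emph{read off from} the translated sectors rather than fixed in advance. Replacing your shift $(A,A)$ by $(l,0)$ with $l$ chosen after Lemma \ref{L4} (large enough that $\spec T_n\subset[-l,0]\times[0,l]$ and $l>s$), and letting the $\Delta_n$ be determined by the sectors, is precisely the missing step in your plan.
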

\begin{proof}
Consider the sectors
\begin{equation*}
V_n=V\left(\frac{3\pi}{4}-\frac{\varepsilon}{n},\pi\right)\subset (-\infty,0]\times [0,+\infty),\quad n=2,3,\cdots ,\nu,
\end{equation*}
of the form (\ref{u49}) and set $S_n=V_n\setminus V_{n-1}$. Applying Lemma \ref{L4}, we find polynomials $T_n(x,y)$ with the properties (\ref{u50})-(\ref{u52}). We have $\spec (T_n)\subset [-l,0]\times [0,l]$, $n=2,3,\cdots ,\nu$, for some integer $l>s$. Denote
\begin{align*}
&Q_n(x,y)= T_n(x,y)e^{ilx},\\
&\Theta_n=((l,0)+V_n)\cap \ZR_+^2,\quad n=2,3,\cdots ,\nu.
\end{align*}
Observe that $\Theta_n$ are triangles of the form (\ref{v5}), and the regions
\begin{align*}
\Delta_n=\Theta_n&\cup\{(x,y)\in \ZR^2:\, (-x,y)\in \Theta_n\}\\
&\cup
\{(x,y)\in \ZR^2:\, (-x,-y)\in \Theta_n\}\\
&\cup\{(x,y)\in \ZR^2:\, (x,-y)\in \Theta_n\}
\end{align*}
have the form (\ref{v4}). It is clear that small enough number $\varepsilon$ guarantees (\ref{u60}). The conditions (\ref{u61}) and  (\ref{u62}) are immediate. The relation (\ref{u63}) follows from (\ref{u51}), (\ref{u64}) follows from (\ref{u52}), since we have
\begin{equation*}
\left|\sum_{j=2}^n Q_j(x,y)\right|=\left|\sum_{j=2}^n T_j(x,y)\right|.
\end{equation*}
Lemma is proved.
\end{proof}
The following lemma is a version of Lemma \ref{L11} for balls instead of triangles. It will be used in the proof of Theorem \ref{T2}.
\begin{lemma}\label{L12}
 For any $\delta>0$ and $m, r\in \ZN$ there exist a sequence of balls $U_n$ and polynomials $Q_n$, $n=2,3,\cdots ,\nu=2^m$, such that
\begin{align}
&\tau(U_n)<\delta,\quad n=2,3,\cdots ,\nu,\label{u65}\\
&B(0,0,r)\supset U_n,\quad n=2,3,,\cdots ,\nu,\label{u66}\\
& \spec Q_n\subset \left(U_n\setminus\bigcup_{k=2}^{n-1}U_{k}\right),\quad n=2,3,,\cdots ,\nu,\label{u67}\\
&\left\|\sum_{j=2}^\nu Q_j\right\|_\infty\le c_1,\label{u68}\\
&\left|\left\{(x,y)\in {\ZT^2}:\max_{2\le n\le \nu }\left|\sum_{j=2}^nQ_j(x,y)\right|>c_2\sqrt{m }\right\}\right|>1.\label{u69}
\end{align}
\end{lemma}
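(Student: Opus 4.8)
The plan is to mimic the proof of Lemma~\ref{L11}, replacing the nested family of sectors by a nested family of balls whose spectra—after a suitable translation—reproduce the sectorial incidence pattern needed to invoke Lemma~\ref{L4}. First I would fix $\varepsilon>0$ small (to be chosen at the end in terms of $\delta$ and $r$) and take the same family of sectors
\begin{equation*}
V_n=V\left(\tfrac{3\pi}{4}-\tfrac{\varepsilon}{n},\pi\right),\quad S_n=V_n\setminus V_{n-1},\quad n=2,3,\dots,\nu,
\end{equation*}
and apply Lemma~\ref{L4} to obtain trigonometric polynomials $T_n$ with $\spec T_n\subset S_n$, $\|\sum T_n\|_\infty\le c_1$, and the lower bound \eqref{u52}. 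All the spectra lie in a fixed box $[-l,0]\times[0,l]$ for some integer $l$.

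The key geometric step is this: a sector with small angular aperture, truncated to an annular band, can be covered by—and its complement separated from—an appropriate difference of two concentric balls, provided the balls are taken very large and centered far from the origin along the bisecting direction of the sector. Concretely, I would place a center $O=(Rw_1,Rw_2)$ with $w=(\cos\tfrac{7\pi}{8},\sin\tfrac{7\pi}{8})$ the unit vector bisecting the sectors $V_n$, at distance $R$ from the origin, and choose nested radii $r_2<r_3<\dots<r_\nu$ so that the balls $B_n=B(O,r_n)$ satisfy: each lattice point of $\spec T_n$ (after translation by a lattice vector $v$ chosen to move the box into $\ZR_+^2$, exactly as $e^{ilx}$ does in Lemma~\ref{L11}) lies in $B_n\setminus\bigcup_{k<n}B_k$. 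For $R$ large the boundary circles $\partial B_n$ look locally like hyperplanes perpendicular to $w$, so the condition ``$\spec(T_n e^{i\langle v,\cdot\rangle})\subset B_n\setminus\bigcup_{k<n}B_k$'' reduces to a linear-ordering condition on the inner products $\langle p,w\rangle$ for $p$ in the spectra of consecutive $T_n$; since the original sectors $S_n$ are already linearly ordered by angle, and hence after translation by a generic far-away $v$ their points are linearly ordered by $\langle\cdot,w\rangle$, this ordering can be matched by choosing the $r_n$ appropriately. Then I set $U_n:=\tfrac1R B_n$ (or rather translate/rescale so that everything sits inside $B(0,0,r)$ and so that $\tau(U_n)=\operatorname{dist}(O,0)/r_n$ is small), define $Q_n(x,y)=T_n(x,y)e^{i\langle v,(x,y)\rangle}$, and read off \eqref{u67} from the spectral placement, \eqref{u68} from \eqref{u51}, and \eqref{u69} from \eqref{u52} together with $|\sum_{j=2}^n Q_j|=|\sum_{j=2}^n T_j|$.

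Finally the two smallness conditions must be reconciled. We need $\tau(U_n)=|O|/r_n<\delta$, so $r_n$ must be large compared with $|O|=R$; but we also need $R$ large compared with the diameter $2l$ of the spectral box so that the circular arcs $\partial B_n$ are nearly flat on the scale of the box and the sector-to-annulus matching works; and after the final rescaling all $U_n$ must sit inside $B(0,0,r)$. Choosing, say, $R$ first (large enough for the flatness estimate given $l$ and $\varepsilon$), then $r_2<\dots<r_\nu$ all larger than $R/\delta$, then rescaling by $r/r_\nu$ arranges $U_n\subset B(0,0,r)$ while preserving the scale-invariant quantity $\tau(U_n)$ and the (scale-invariant) spectral incidences. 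The main obstacle is precisely the geometric lemma that a thin truncated sector is caught between two concentric large far-off circles with the complement clean: one must verify quantitatively that the error between a circular arc and its tangent line over the relevant box is $O(l^2/R)$, small enough that a point in $S_n$ stays inside $B_n$ and outside $B_{n-1}$ and a point outside $V_\nu$ or inside $V_2$ is handled correctly—this is a routine but slightly delicate curvature estimate, and choosing the gaps $r_n-r_{n-1}$ large enough relative to that error is what makes it go through.
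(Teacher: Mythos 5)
Your approach diverges from the paper's in a way that introduces a genuine gap. The paper does not use concentric balls. It uses \emph{disjoint} sectors $V_n=V(\theta_{2n+1},\theta_{2n})$ with $\theta_k=\tfrac{\pi}{2}+\tfrac{\varepsilon}{k}$ accumulating at $\tfrac{\pi}{2}$, translates them to $\Theta_n=(R,0)+V_n$, and then takes balls $B_n=B\bigl(0,-R\tan(\varepsilon/n),R/\cos(\varepsilon/n)\bigr)$ that all pass through the common point $(R,0)$ and are each \emph{tangent} there to the line $L_n$ bounding the translated sectors. Because $(R,0)$ is precisely the apex of the translated sector fan, the circles $\partial B_n$ approximate (for $R$ large) exactly the family of lines $L_n$ through that apex, and so the annular differences of the $B_n$ match the sector decomposition. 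The quantity $\tau(B_n)=\sin(\varepsilon/n)$ is then automatically small.

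Your construction replaces this with concentric balls $B_n=B(O,r_n)$, $O$ far away in the bisecting direction $w$, and you claim the spectral placement reduces to an ordering of the numbers $\langle p,w\rangle$ that is automatically consistent with the sector index ``after translation by a generic far-away $v$''. This step is wrong, for two reasons. First, the ordering by $\langle\cdot,w\rangle$ is translation-invariant: $\langle p+v,w\rangle-\langle q+v,w\rangle=\langle p-q,w\rangle$, so choosing $v$ large does nothing to help. Second, angular ordering of the sectors does not imply linear ordering of their points by any fixed functional: for $p\in S_n$ one has $\langle p,w\rangle=|p|\cos(\theta_p-\theta_w)$, so the dominant dependence is on the radius $|p|$, which Lemma~\ref{L4} does not control. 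Geometrically, adjacent truncated sectors in a fan share a boundary ray through the common vertex; along that ray the distance to \emph{any} fixed $O$ sweeps the same interval, so no family of concentric circles (equivalently, of parallel half-planes) can separate consecutive sectors. This is precisely why the paper's balls are forced to pass through the apex $(R,0)$ instead of being concentric. A further minor issue: your final rescaling ``by $r/r_\nu$'' to arrange $U_n\subset B(0,0,r)$ cannot preserve the spectral incidences, since $\spec Q_n$ consists of fixed points of $\ZZ^2$ and does not rescale with the balls; the paper instead handles this by adjusting $R$ (and does not rescale anything).

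To salvage your route you would need, in addition, to show that the reference frequencies $(p_n,q_n)$ in the proof of Lemma~\ref{L4} can be chosen with $|(p_n,q_n)|$ growing so fast that the translated spectra are radially ordered about $O$ despite the shared vertex. That is a genuine extra step, not a ``routine curvature estimate'', and it is exactly the step the paper avoids by anchoring all its balls at the apex of the sector fan.
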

\begin{proof}
Consider the sectors
\begin{equation*}
V_n=V\left(\theta_{2n+1},\theta_{2n}\right),\quad n=2,3,\cdots ,\nu,
\end{equation*}
where
\begin{equation*}
\theta_k=\frac{\pi}{2}+\frac{\varepsilon}{k}.
\end{equation*}
Applying Lemma \ref{L4}, we find polynomials $T_n(x,y)$ with the properties (\ref{u50})-(\ref{u52}).
Denote
\begin{align*}
&Q_n(x,y)= T_n(x,y)e^{iRx},\\
&\Theta_n=(R,0)+V_n,\quad n=2,3,\cdots ,\nu,
\end{align*}
where the number $R$ will be determined bellow. We have
\begin{equation*}
\spec (Q_n)\subset \Theta_n.
\end{equation*}
Consider the balls
\begin{equation*}
B_n=B\left(0,-R\tan(\varepsilon/n),R/\cos (\theta_{n})\right)
\end{equation*}
and the lines $L_n$ given by the formulae
\begin{equation*}
x=R+t\cos(\theta_{n}), \quad y=t\sin(\theta_{n}).
\end{equation*}
Note that the boundary of the sector $\Theta_n$ is determined by the lines $L_{2n}$ and $L_{2n-1}$. Besides, the line $L_n$ is tangential for the ball $B_n$ at the point $(R,0)$. A simple calculation shows that
\begin{equation}\label{u70}
\tau(B_n)=\sin (\varepsilon/n)<\sin \varepsilon.
\end{equation}
Using this, for a bigger enough $R$ we will have a good approximation of the lines $L_n$ by the balls $B_n$ and therefore we will get
\begin{align}
&\Theta_n\subset B_{k},\quad  k>n,\label{u71}\\
&\Theta_n\cap B_{k}=\varnothing,\quad  k\le n.\label{u72}
\end{align}
We denote $U_n=B_{2n}$. A small enough number $\varepsilon$ guarantees (\ref{u65}) according to (\ref{u70}). Then taking $R$ bigger enough we derive (\ref{u66}). It easy to check that the relation (\ref{u67}) can be obtained from (\ref{u71}) and (\ref{u72}).  Then (\ref{u68})  follows from (\ref{u51}), (\ref{u69}) follows from (\ref{u52}), since we have
\begin{equation*}
\left|\sum_{j=2}^n Q_j(x,y)\right|=\left|\sum_{j=2}^n T_j(x,y)\right|.
\end{equation*}
Lemma is proved.
\end{proof}

\section{Proof of theorems}

\begin{proof}[Proof of Theorem \ref{T1}]
Fix integers $\mu_k$, $k=1,2,\ldots $ satisfying
\begin{equation}\label{p1}
\mu_{k+1}-\mu_{k}=2^{k^6}-1,\quad k=1,2,\ldots.
\end{equation}
Applying Lemma \ref{L11} successively, we may define polynomials $Q_n(x,y)$ and regions $\Delta_n$, $n=1,2,\ldots$, of the form (\ref{v4}) satisfying the relations
 \begin{align}
&\rho(\Delta_n)<1+1/n,\quad n=1,2,\cdots ,\label{p2}\\
&\Delta_{n-1}\subset \Delta_n,\quad n=1,2,\cdots ,\label{p3}\\
& \spec Q_n\subset \left(\Delta_n\setminus\Delta_{n-1}\right)\cap \ZR_+^2,\quad n=1,2,\cdots ,\label{p4}\\
&\left\|\sum_{j=\mu_k+1}^{\mu_{k+1}} Q_j\right\|_\infty <c_1,\quad k=1,2,\ldots ,\label{p5}\\
&\left|\left\{(x,y)\in {\ZT^2}:\max_{\mu_k< n\le\mu_{k+1} }\left|\sum_{j=\mu_k+1}^nQ_j(x,y)\right|>c_2k^3\right\}\right|>1.\label{p6}
\end{align}
It is clear that we may define $g_n(x,y)$ equal to one of the following real polynomials
\begin{equation*}
\Re(Q_n(x,y)), \quad \Im(Q_n(x,y)),
\end{equation*}
such that
\begin{equation}\label{p7}
\left|\left\{(x,y)\in {\ZT^2}:\max_{\mu_k< n\le\mu_{k+1} }\left|\sum_{j=\mu_k+1}^ng_j(x,y)\right|>\frac{c_2k^3}{2}\right\}\right|>\frac{1}{2}.
\end{equation}
Each $g_n(x,y)$ can be considered as a complex polynomial and from (\ref{p4}) it follows that
\begin{equation*}
\spec g_n\subset \Delta_n\setminus\Delta_{n-1},\quad n=1,2,\ldots .
\end{equation*}
Consider a real function
\begin{equation}\label{p8}
f(x,y)=\sum_{k=1}^\infty\frac{1}{k^2}\sum_{j=\mu_k+1}^{\mu_{k+1}}g_j(n_kx,n_ky),
\end{equation}
where $n_k\nearrow\infty $ is a sequence of integers that will be defined bellow. Since this series converges uniformly, $f$ is continuous. We denote
\begin{align*}
&E_k=\left\{(x,y)\in {\ZT^2}:\max_{\mu_k< n\le\mu_{k+1} }\left|\sum_{j=\mu_k+1}^ng_j(x,y)\right|>\frac{c_2k^3}{2}\right\},\\
&\tilde\Delta_j=\{(u,v)\in\ZR^2:\, (u/n_k,v/n_k)\in\Delta_j\},\quad \mu_k<j\le \mu_{k+1}.
\end{align*}
It is clear that each $\tilde\Delta_j$ is also a region of the form (\ref{v4}) and from (\ref{p2})-(\ref{p4}), (\ref{p7}) we get respectively
\begin{align}
&\rho(\tilde\Delta_n)<1/n,\quad n=1,2,\cdots ,\label{p9}\\
&\tilde\Delta_{n-1}\subset \tilde\Delta_n,\quad n=1,2,\cdots ,\label{p10}\\
& \spec \big(g_n(n_kx,n_ky)\big)\subset \tilde\Delta_n\setminus\tilde\Delta_{n-1},\, \mu_k<n\le \mu_{k+1},\label{p11}\\
&|E_k|>1/2.\label{p14}
\end{align}
According to Lemma \ref{L22}, we may define the integers $n_k$ such that
\begin{equation}\label{p12}
\left|\cap_{l\ge 1}\cup_{k\ge l}E_k(n_k)\right|=4\pi^2.
\end{equation}
It is clear that if  $(x,y)\in E_k(n_k)$, then
\begin{equation}\label{p13}
\max_{\mu_k< n\le\mu_{k+1} }\left|\sum_{j=\mu_k+1}^ng_j(n_kx,n_ky)\right|> \frac{c_2 k^3}{2}.
\end{equation}
From (\ref{p10}) and (\ref{p11}) it follows that
\begin{equation*}
\left|S_{\tilde\Delta_n}(x,y,f)-S_{\tilde\Delta_{\mu_k}}(x,y,f)\right|=\frac{1}{k^2}\left|\sum_{j=\mu_k+1}^ng_j(n_kx,n_ky)\right|
\end{equation*}
for any $\mu_k< n\le\mu_{k+1} $. Combining this with (\ref{p13}), we get
\begin{equation*}
\max_{\mu_k< n\le\mu_{k+1}}\left|S_{\tilde\Delta_n}(x,y,f)-S_{\tilde\Delta_{\mu_k}}(x,y,f)\right|> \frac{c_2k}{2},\quad (x,y)\in E_k(n_k),
\end{equation*}
then taking into account of (\ref{p12}), we find that the sums $S_{\tilde\Delta_n}(x,y,f)$ diverge almost everywhere. This proves Theorem \ref{T1}.
\end{proof}
In the proof of Theorem \ref{T2} we use the same argument as in Theorem \ref{T1}. So the details of the proof will be omitted.
\begin{proof}[Proof of Theorem \ref{T2}]
Fix integers $\mu_k$, $k=1,2,\ldots $, satisfying (\ref{p1}).
Applying Lemma \ref{L11}, we define polynomials $Q_n(x,y)$ and balls $B_n$, $n=1,2,\ldots$, satisfying the relations
\begin{align*}
&\tau(U_n)<1/n,\quad n=2,3,\cdots ,\nu,\\
& \spec Q_n\subset \left(U_n\setminus\bigcup_{k=2}^{n-1}U_{k}\right),\quad n=1,2,\cdots ,\\
&\left\|\sum_{j=\mu_k+1}^{\mu_{k+1}} Q_j\right\|_\infty <c_1,\quad k=1,2,\ldots ,\\
&\left|\left\{(x,y)\in {\ZT^2}:\max_{\mu_k< n\le\mu_{k+1} }\left|\sum_{j=\mu_k+1}^nQ_j(x,y)\right|>c_2k^3\right\}\right|>1.
\end{align*}
Then the function
\begin{equation*}
f(x,y)=\sum_{k=1}^\infty\frac{1}{k^2}\sum_{j=\mu_k+1}^{\mu_{k+1}}Q_j(n_kx,n_ky),
\end{equation*}
where $n_k$ are properly defined integers, satisfies (\ref{v7}). The rest part of the proof is exactly the same as in the proof of Theorem \ref{T1},  so we leave it to our patient reader.
\end{proof}


\begin{thebibliography}{99}
\bibitem{Ant1} Antonov N. Yu., Convergence of Fourier series, East Journal on Approximation, \textbf{2}, no 2, 187--196(1996)
\bibitem{Bari} Bari N.K., Trigonometric series, Moscow (1961)
\bibitem{Car} Carleson L., On the convergence and growth of partial sums of Fourier series, Acta Math.,  \textbf{116}, 135--137(1966)
\bibitem{Ch1} Fefferman C.,  On the convergence of multiple Fourier series, Bull. Amer. Math. Soc., \textbf{77}, no 5,  744--745(1971)
\bibitem{Ch2} Fefferman C.,  On the divergence of multiple Fourier series, Bull. Amer. Math. Soc., \textbf{77}, no 2,  191--195(1971)
\bibitem{Hunt} Hunt A., On the convergence of Fourier series, Orthogonal expansions and continuous analogous, Proc. Conf. Southern Inninois Univ. Edvardsville, 235--255(1968)
\bibitem{Kar1} Karagulyan G. A., On unboundedness of maximal operators for directional Hilbert transforms, Proc. Amer. Math. Soc., \textbf{135}, no 10,  3133--3141(2007)
\bibitem{Kar2} Karagulyan G. A. and Muradyan K. R., On divergent triangular sums of double Fourier series, Izvestia NAN Armenii, \textbf{49}, no 6(2015).(English Translation in Journal of Contemporary Mathematical Analysis, \textbf{49}, no 6(2015)).
\bibitem{Kar3} Karagulyan G. A. and Muradyan K. R., On the divergence of Walsh and Haar series by sectorial and triangular regions, Proc. Yerevan State Univ., \textbf{234}, no 2, 3--12(2014)
\bibitem{NiUl}
Nikishin E. M. and Ul'yanov P. L., On absolute and unconditional
convergence, Uspechi Math. Nauk., \textbf{22}, no 3,  240--242(1967) (in Russian).
\bibitem{KaSa}
 Kashin B. S. and Saakian A. A., Orthogonal series, AMS, Providence, R.I (1989)
\bibitem{Sjo1} Sj\"{o}lin P., Convergence almost everywhere certain singular integrals and multiple Fourier series, Arkiv f\"{o}r Mat.,  \textbf{9}, 65--90(1971)
\bibitem{Tev} Tevzadze N. R., On the convergence of double Fourier series of a square integrable function, Sobsh. AN Gruzii, \textbf{58}, no 2, 277--279.(1970)
\bibitem{Zyg} Zygmund A., Trigonometric series, v.1, Cambridge Univ. Press(1959)
\end{thebibliography}
\end{document}